%% 
%% Copyright 2007-2020 Elsevier Ltd
%% 
%% This file is part of the 'Elsarticle Bundle'.
%% ---------------------------------------------
%% 
%% It may be distributed under the conditions of the LaTeX Project Public
%% License, either version 1.2 of this license or (at your option) any
%% later version.  The latest version of this license is in
%%    http://www.latex-project.org/lppl.txt
%% and version 1.2 or later is part of all distributions of LaTeX
%% version 1999/12/01 or later.
%% 
%% The list of all files belonging to the 'Elsarticle Bundle' is
%% given in the file `manifest.txt'.
%% 
%% Template article for Elsevier's document class `elsarticle'
%% with harvard style bibliographic references

\documentclass[preprint,12pt]{elsarticle}

%% Use the option review to obtain double line spacing
%% \documentclass[preprint,review,12pt]{elsarticle}

%% Use the options 1p,twocolumn; 3p; 3p,twocolumn; 5p; or 5p,twocolumn
%% for a journal layout:
%% \documentclass[final,1p,times]{elsarticle}
%% \documentclass[final,1p,times,twocolumn]{elsarticle}
%% \documentclass[final,3p,times]{elsarticle}
%% \documentclass[final,3p,times,twocolumn]{elsarticle}
%% \documentclass[final,5p,times]{elsarticle}
%% \documentclass[final,5p,times,twocolumn]{elsarticle}

%% For including figures, graphicx.sty has been loaded in
%% elsarticle.cls. If you prefer to use the old commands
%% please give \usepackage{epsfig}

%% The amssymb package provides various useful mathematical symbols
%\usepackage{amssymb}
%% The amsthm package provides extended theorem environments
%% \usepackage{amsthm}
\usepackage{amsmath}
\usepackage{amssymb}
\usepackage{amsfonts}
\usepackage{amsthm}
\usepackage{comment}
\usepackage{pgfplots}
\usepackage{graphicx}
\usepackage{amsmath}
\usepackage[T1]{fontenc}
\usepackage{mathrsfs}
\usepackage{latexsym}
\usepackage{amssymb}
\usepackage{graphicx}
\usepackage{float}
\usepackage{extarrows}
\usepackage{epstopdf}
\usepackage{caption}
\usepackage{subcaption}
\usepackage{amsmath}
\usepackage{wrapfig}
\usepackage[colorlinks]{hyperref}
\usepackage{lipsum}
\usepackage{subcaption}
\usepackage{multicol}
\usepackage{array}
%\captionsetup{compatibility=false}

%-----------newcommand--------------------------------------------------

\newtheorem{remark}{Remark}
\newtheorem{theorem}{Theorem}
\newtheorem{lemma}{Lemma}
\newtheorem{corollary}{Corollary}

\graphicspath{ {./Figure/}}

%% The lineno packages adds line numbers. Start line numbering with
%% \begin{linenumbers}, end it with \end{linenumbers}. Or switch it on
%% for the whole article with \linenumbers.
%% \usepackage{lineno}

\journal{}

\begin{document}

\begin{frontmatter}

%% Title, authors and addresses

%% use the tnoteref command within \title for footnotes;
%% use the tnotetext command for theassociated footnote;
%% use the fnref command within \author or \address for footnotes;
%% use the fntext command for theassociated footnote;
%% use the corref command within \author for corresponding author footnotes;
%% use the cortext command for theassociated footnote;
%% use the ead command for the email address,
%% and the form \ead[url] for the home page:
%% \title{Title\tnoteref{label1}}
%% \tnotetext[label1]{}
%% \author{Name\corref{cor1}\fnref{label2}}
%% \ead{email address}
%% \ead[url]{home page}
%% \fntext[label2]{}
%% \cortext[cor1]{}
%% \affiliation{organization={},
%%             addressline={},
%%             city={},
%%             postcode={},
%%             state={},
%%             country={}}
%% \fntext[label3]{}

\title{A new symmetric linearly implicit exponential integrator preserving polynomial invariants or Lyapunov functions for conservative or dissipative systems}

%% use optional labels to link authors explicitly to addresses:
%% \author[label1,label2]{}
%% \affiliation[label1]{organization={},
%%             addressline={},
%%             city={},
%%             postcode={},
%%             state={},
%%             country={}}
%%
%% \affiliation[label2]{organization={},
%%             addressline={},
%%             city={},
%%             postcode={},
%%             state={},
%%             country={}}

\author[inst1]{Lu Li}

\affiliation[inst1]{organization={Machine Intelligence Department},%Department and Organization
            addressline={Simula Metropolitan Center for Digital Engineering}, 
            city={Oslo},
            postcode={0167}, 
           % state={State One},
            country={Norway}}
%\affiliation[inst1]{organization={Department of Mathematics},%Department and Organization
 %           addressline={Norwegian University of Science and Technology}, 
%            city={Trondheim},
%            postcode={7049}, 
           % state={State One},
%            country={Norway}}
%\author[inst2]{Author Two}
%\author[inst1,inst2]{Author Three}

%\affiliation[inst2]{organization={Department Two},%Department and Organization
%            addressline={Address Two}, 
%            city={City Two},
%            postcode={22222}, 
 %           state={State Two},
%            country={Country Two}}

\begin{abstract}
%% Text of abstract
We present a new symmetric linearly implicit exponential integrator that preserves the polynomial first integrals or the Lyapunov functions for the conservative and dissipative stiff equations, respectively. The method is tested by both oscillated ordinary differential equations and partial differential equations, e.g., an averaged system in wind-induced oscillation,  the Fermi–Pasta–Ulam systems, and the polynomial pendulum oscillators. The numerical simulations confirm the conservative properties of the proposed method and demonstrate its good behavior in superior running
speed when compared with fully implicit schemes for long-time simulations.

\end{abstract}

%%Graphical abstract
%\begin{graphicalabstract}
%\includegraphics{grabs}
%\end{graphicalabstract}

%%%Research highlights
%\begin{highlights}
%\item Research highlight 1
%\item Research highlight 2
%\end{highlights}

\begin{keyword}
%% keywords here, in the form: keyword \sep keyword
Linearly implicit \sep energy-preserving  \sep exponential integrator \sep conservative system \sep dissipative system
%% PACS codes here, in the form: \PACS code \sep code
%\PACS 0000 \sep 1111
%% MSC codes here, in the form: \MSC code \sep code
%% or \MSC[2008] code \sep code (2000 is the default)
%\MSC 0000 \sep 1111
\end{keyword}

\end{frontmatter}

%% \linenumbers

%% main text
\section{Introduction}
This paper focuses on the semilinear systems of the form 
\begin{equation}\label{stiff equation}
\dot{y}(t)=Ay+f(y),\quad y(t_0)=y_{0},
\end{equation} 
where $A$ is a linear unbounded differential operator or a matrix which has  eigenvalues with large negative real part or with purely imaginary eigenvalues of large modulus, and the non-linear term $f$ is supposed to be nonstiff satisfying Lipschitz condition. The semilinear system \eqref{stiff equation} arises in many applications, such as the charged-particle dynamic \cite{Cary}, the  rapidly rotating shallow water equations for semi-geostrophic particle motion \cite{Cotter}, and the semi-discretization of semilinear PDEs.
Equation \eqref{stiff equation} is usually stiff and
one popular class of  numerical integrators that are suitable for such problems is  the exponential integrators \cite{Hochbruck2010}. These methods  normally permit larger step sizes and provide higher accuracy than the non-exponential integrators. The basic idea behind such methods is to solve the stiff part with an exact solver. 
Based on the variation of constants formula,  the exact solution of \eqref{stiff equation} is given by 
\begin{equation}\label{exact-stiff equation}
y(t_0+h)=\text{exp}(hA)y_0+h\int_0^1\text{exp}((1-\tau)hA)f(y(t_0+\tau h))d\tau,
\end{equation}
where the integration interval is $[t_0,t_0+h]$.
%Observe that the linear part of \eqref{stiff equation} can be integrated exactly in \eqref{exact-stiff equation};
Most exponential integrators can be obtained from an appropriate approximation of the integral in exact solution \eqref{exact-stiff equation}. For example,
the exponential Euler method is obtained by interpolating the
nonlinear term at $y_0$ with the form
\begin{equation*}\label{EEM}
y_{1}=\text{exp}(hA)y_0+h\phi(hA)f(y_0),
\end{equation*} 
and the implicit exponential Euler method is given by interpolating the
nonlinear term at  $y_{1}$ with the form
\begin{equation*}\label{IEEM}
y_{1}=\text{exp}(hA)y_0+h\phi(hA)f(y_{1}),
\end{equation*} 
where $\phi(z):=\frac{e^z-1}{z}$ \cite{Hochbruck2010}.
More examples of  exponential integrators can be found in, e.g., \cite{Hochbruck2010, Hochbruck2008}.

Equation \eqref{stiff equation} might possess  important geometric structures. In particular, the canonical Hamiltonian structure corresponds to  equations of the form
\begin{equation}\label{Hamilonian stiff equation}
\dot{y}(t)=J\nabla H(y),\quad y(t_0)=y_{0},
\end{equation} 
where 
$$H(y)=\frac{1}{2}y^TMy+ U(y),$$
and
\begin{equation*}\label{Canonical skew-symmetric matrix}
%\dot{y}=f(y)=JH\,y,\quad y(t_0)=y_{0},\hskip2cm
J = \left[ \begin{matrix}
0 & I_{m}\\
-I_{m} &  0 
\end{matrix} \right].
\end{equation*} 
with  $I_{m}$ an identity matrix, $M$ a symmetric matrix and $U(y)$ a scalar function. Two prominent features of equation \eqref{Hamilonian stiff equation} are the conservation of the energy function $H(y)$ and the preservation of the symplecticity. 
In this work,  we intend to focus on more general equations than the canonical Hamiltonian systems, where the matrix  $J$ in \eqref{Hamilonian stiff equation} is allowed to be a constant skew-symmetric matrix or a negative semidefinite matrix. A skew-symmetric matrix $J$ grantees the conservation of  the energy, while a negative semidefinite matrix $J$ will lead to a dissipative system with Lyapunov function $H(y)$  monotonically decreasing.  In view of these special structures in equation \eqref{Hamilonian stiff equation}, a type of candidate  methods will be the structure-preserving exponential integrators which
have superior qualitative behavior over long-time integration compared with the general-purpose designed higher-order methods \cite{Hairer2006}. Examples include symmetric methods \cite{Celledoni2008}, symplectic methods \cite{Wuxinyuan2018} and energy-preserving methods \cite{Wuxinyuan2016}.  Here we would like to consider particularly energy-preserving exponential integrators. Such methods in previous work are fully implicit, e.g., \cite{Wuxinyuan2016,miyatake2014energy,cui2021mass,shen2019geometric}, except for the recent work \cite{jiang2020linearly}, where a linearly implicit method for the nonlinear Klein--Gordon equation was considered using the scalar auxiliary variable (SAV) \cite{shen2019new} approach. For linearly implicit methods, only one linear system is solved in each iteration and thus less computational cost is needed.
For this consideration, we expect to construct linearly implicit methods in this work.

There are two mostly used techniques in creating linearly implicit energy-preserving methods for general conservative/dissipative systems with gradient flow, according to the authors' best knowledge. The first one follows from Furihata and co-authors, where multiple-point methods are used so that the nonlinearity can be portioned out \cite{furihata2011discrete}.  Further studies of this method were presented in  \cite{dahlby2011general} and \cite{eidnes2020linearly} using the concept of polarized energy and polarized discrete gradient. 
The second technique is to combine the linearly implicit Crank-Nicolson method and the invariant energy quadratization (IEQ) \cite{zhao2017numerical} or the scalar auxiliary variable (SAV) \cite{shen2019new} approach. 
%For the method based on IEQ, one linear system with variable coefficients matrix needs to be solved for each time step, but for the method based on SAV, one linear system with constant coefficients is solved in each step. 
Both the IEQ-based and the SAV-based methods are applicable for nonlinear systems, including nonpolynomials; however, these methods need bounded free energy regarding the nonlinear terms. Besides, the linearly implicit methods constructed based on the second approach have no symmetric property. This paper will focus on the technique using polarized energy for problems with polynomial energy functions. There are several reasons for choosing this technique. First, there are huge amounts of PDEs and ODEs with polynomial energy functions, e.g., the nonlinear Schrödinger equations, the nonlinear wave equations, the KdV equations, the Camassa-Holm equations, the wind-induced oscillator, %\cite{guckenheimer2013nonlinear}
the polynomial pendulum oscillator, and so on. Second, there are no other restrictions for the nonlinear terms except for being polynomials. Third, the general scheme of the methods based on the first technique looks much more straightforward than the second technique. Last but not least, the linearly implicit methods constructed based on the first technique can be symmetric, and it has been shown that symmetric methods applied to (near-)integrable reversible systems share similar properties to symplectic methods: linear error growth, long-time near-conservation of first integrals and existence of invariant tori \cite{hairer2006geometric}. Thus methods with symmetric property usually provide prominent long-time behavior.

The paper is organized as follows. First, we construct the symmetric linearly implicit energy-preserving exponential integrators and discuss their properties in Section 2. In Section 3, numerical examples are presented to illustrate the performance of the proposed method. In the last section,  we conclude the paper with a summary of the properties and advantages shared by the  method.

%% Look at what Long and Gabi do

%%%%%%%%%%-----------%%%%%%%%%%%%%%
\section{Symmetric linearly implicit energy-preserving exponential scheme} 
In this section, we combine the idea of constructing linearly implicit methods using polarized energy \cite{dahlby2011general}  and the idea of constructing energy-preserving exponential methods using discrete gradient \cite{Wuxinyuan2016} to build the symmetric linearly implicit energy-preserving exponential integrators.  To present the method more intuitively, we restrict the nonlinear term $U(y)$ (potential energy) in equation \eqref{Hamilonian stiff equation} to be a cubic polynomial. However, the method is also applicable to problems with any higher-order polynomials, for which the results will be introduced briefly in this paper too. 

The critical point of using polarized energy to construct a linearly implicit method is to portion out the nonlinearity over consecutive time steps. This can be carried out by constructing quadratic polarized energy and then performing the polarized discrete gradient method, similarly as shown in \cite{eidnes2019linearly} for a cubic polynomial. A systematical way of constructing a quadratic polarization for higher-order polynomial functions is presented in  \cite{dahlby2011general}, for example 
\begin{itemize}
    \item  $U(x)=x^2$  can be polarized by  $\bar{U}(x,y)=\theta\frac{x^2+y^2}{2}+(1-\theta)xy$, $\theta\in [0,1]$,
    \item   $U(x)=x^3$  can be polarized by  $\bar{U}(x,y)=x\frac{x+y}{2}y,$
      \item   $U(x)=x^4$ can be polarized by  $\bar{U}(x,y)=x^2y^2,$
       \item  $U(x)=x^5$ can be polarized by  $\bar{U}(x,y,z,w)=xyzw\frac{x+y+z+w}{4}$,
    \item $U(x)=x^6$ can be polarized by  $\bar{U}(x,y,z)=x^2y^2z^2$.
\end{itemize}
Following from \cite{eidnes2019linearly},  $\overline{\nabla}\bar{U}$ is said to be a polarized discrete gradient for a polarized  energy $\bar{U}$ if  the following conditions hold
\begin{equation}\label{discrete eq}
\begin{split}
\bar{U}(y,z) - \bar{U}(x,y) &= 
\frac{1}{2}(z-x)^T \overline{\nabla}\bar{U}(x,y,z), \\
\overline{\nabla}\bar{U}(x,x,x)&= \nabla U(x). 
\end{split}
\end{equation}
%is satisfied. 
We will use the polarized discrete gradient to construct the linearly implicit energy-preserving exponential integrators. Consider the variation of constants formula on interval $[t_0,t_0+2h]$ for problem \eqref{stiff equation}  in the form 
\begin{equation}\label{Variation of constants 2step}
y(t_0+2h)=\text{exp}(2hJM)y_0+2h\int_{0}^{1}\text{exp}(2(1-\xi)hJM)J\nabla U(y(t_0+2h\xi))d\xi.
\end{equation} 
Substituting $\nabla U(y(t_0+2h\xi))$ with the polarized discrete gradient $\overline{\nabla}\bar{U}(y_0,y_1,y_2)$ in \eqref{Variation of constants 2step}, we obtain the energy-preserving exponential integrator for problem \eqref{Hamilonian stiff equation} as follows
%the following linearly implicit approximation
%\begin{equation}\label{approx 1}
%y(t_0+2h)\approx\text{exp}(2hJM)y_0+2h\int_{0}^{1}\text{exp}(2(1-\xi)hJM)J\overline{\nabla}\bar{U}(y_0,y_1,y_2)d\xi.
%\end{equation} 
%Different approximations of the integration $\int_{0}^{1}\text{exp}(2(1-\xi)hJM)d\xi$ would lead to different exponential integrators, and here we approximate the integration by  $\phi(2hJM)$, where the scalar function $\phi$ satisfies $\phi(z)=\phi_1(z)$ as defined in the above. Thus our linearly implicit energy-preserving exponential integrator for problem \eqref{Hamilonian stiff equation} with a cubic polynomial is given by 
\begin{equation}\label{approx EP exponential integrator}
y_{n+2}=\text{exp}(2hJM)y_n+2h\phi(2hJM)J \overline{\nabla}\bar{U}(y_n,y_{n+1},y_{n+2}) \quad\quad \text{(LIEEP)}.
\end{equation}

\begin{remark}
Suppose that a quadratic polarization of a higher-order polynomial function $U(y_n)$  has the form  $\bar{U}(y_{n},\cdots,y_{n+i},\cdots y_{n+p-1})$. Then the generalization of the polarized discrete gradient in \eqref{discrete eq} can be given by 
\begin{equation}\label{discrete eq_higherorder}
\begin{split}
\bar{U}(y_{n+1},\cdots y_{n+p}) -\bar{U}(y_{n},\cdots y_{n+p-1})&= 
\frac{1}{p}( y_{n+p}-y_{n})^T \overline{\nabla}\bar{U}(y_{n},\cdots, y_{n+p}), \\
\overline{\nabla}\bar{U}(y_{n},\cdots, y_{n+p})&= \nabla U(y_n).
\end{split}
\end{equation}
\end{remark} 

\begin{remark}\label{orderp-scheme}
Suppose that the polarized discrete gradient for a higher-order polynomial energy function $U(y_n)$ is given by   $\overline{\nabla}\bar{U}(y_{n},y_{n+1},\cdots, y_{n+p})$ following from equation \eqref{discrete eq_higherorder}. Then the linearly implicit energy-preserving exponential integrator for problem  \eqref{Hamilonian stiff equation}  can be  given by 
\begin{equation}\label{approx EP exponential integrator_higherorder}
y_{n+p}=\text{exp}(phJM)y_n+ph\phi(phJM)J \overline{\nabla}\bar{U}(y_{n},y_{n+1},\cdots, y_{n+p}).
\end{equation} 
\end{remark} 
\begin{remark}
Scheme \eqref{approx EP exponential integrator} is  a special case of scheme \eqref{approx EP exponential integrator_higherorder} with $p=2$.
\end{remark} 

Before presenting the main theorems about the proposed method's conservative properties, we begin with a lemma.

\begin{lemma}\label{lemma}
For any symmetric matrix $M$, positive integer $p$ and  scalar $h>0$, the matrix 
\begin{equation*}\label{lemmaEq}
B=\text{exp}(phJM)^TM\text{exp}(phJM)-M
\end{equation*} 
is zero  when $J$ is skew-symmetric and  negative semidefinite  when $J$ is negative semidefinite.

\end{lemma}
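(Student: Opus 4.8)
The plan is to view $B$ as the net change, over the interval $[0,ph]$, of the matrix-valued function
\[
\Phi(t) := \exp(tJM)^T M \exp(tJM),
\]
and to show that its derivative factors through the symmetric part $J + J^T$ of $J$. Writing $E(t) := \exp(tJM)$, so that $E'(t) = JM\,E(t)$ and, since $M^T = M$, $(E(t)^T)' = E(t)^T (JM)^T = E(t)^T M J^T$, I would differentiate the product to obtain
\[
\Phi'(t) = E(t)^T M J^T M E(t) + E(t)^T M J M E(t) = E(t)^T M (J + J^T) M E(t).
\]
This single identity is the heart of the argument; both conclusions follow from it together with the definiteness hypotheses on $J$.

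For the skew-symmetric case, $J + J^T = 0$, so $\Phi'(t) \equiv 0$. Hence $\Phi$ is constant on $[0,ph]$, and since $\Phi(0) = M$ we get
\[
B = \Phi(ph) - \Phi(0) = \exp(phJM)^T M \exp(phJM) - M = 0,
\]
as claimed.

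For the negative semidefinite case, I would integrate the derivative identity, applying the fundamental theorem of calculus entrywise:
\[
B = \Phi(ph) - \Phi(0) = \int_0^{ph} E(t)^T M (J + J^T) M E(t)\, dt.
\]
Fixing an arbitrary vector $x$ and setting $w(t) := M E(t) x$, this yields
\[
x^T B x = \int_0^{ph} w(t)^T (J + J^T) w(t)\, dt = 2\int_0^{ph} w(t)^T J w(t)\, dt.
\]
Because $J$ is negative semidefinite, meaning $v^T J v \le 0$ for every vector $v$, each integrand is nonpositive, so $x^T B x \le 0$ for all $x$; this is precisely the negative semidefiniteness of $B$.

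The only step demanding care — and the likeliest source of sign or transpose errors — is the derivative computation, in particular correctly transposing the matrix exponential and noting that $(JM)^T = M J^T$ thanks to the symmetry of $M$. Once the factorization $\Phi'(t) = E(t)^T M (J + J^T) M E(t)$ is secured, the two cases drop out immediately, and no further structure (commutativity of $J$ and $M$, invertibility of $M$, and so on) is required.
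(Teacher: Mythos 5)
Your proof is correct and needs no repair. Bear in mind, though, how the paper handles this lemma: it gives no argument of its own, but defers to Lemma 2.2 of \cite{Wuxinyuan2016}, remarking only that the statement is unchanged when $h$ is replaced by $ph$. Your argument---differentiate $\Phi(t)=\exp(tJM)^TM\exp(tJM)$ to obtain the factorization $\Phi'(t)=E(t)^TM(J+J^T)ME(t)$, then conclude $\Phi'\equiv 0$ in the skew-symmetric case and integrate a pointwise nonpositive quadratic form in the negative semidefinite case---is precisely the standard proof of that cited lemma, so you have reconstructed the intended argument rather than taken a genuinely different route. What your write-up adds is self-containedness, and it makes visible exactly which hypotheses matter: only the symmetry of $M$ is used (no invertibility of $M$, no commutativity of $J$ and $M$, both of which the paper's main theorems do lean on elsewhere), and negative semidefiniteness of the possibly nonsymmetric $J$ enters only through $v^TJv\le 0$, i.e., through the negative semidefiniteness of the symmetric part $J+J^T$. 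One small point you should state rather than leave implicit: $B$ is symmetric (each $\Phi(t)$ is, since $M=M^T$), so verifying $x^TBx\le 0$ for all $x$ does establish negative semidefiniteness in the sense the lemma intends.
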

The proof of this lemma follows directly from Lemma 2.2 in \cite{Wuxinyuan2016} since the result does not change when  $h$ is replaced by $ph$.

\begin{theorem}\label{enerPre-cubic}
Scheme \eqref{approx EP exponential integrator} preserves the following polarized energy 
\begin{equation}\label{Penergy}
\bar{H}(y_n,y_{n+1})=\frac{1}{4}(y_n^TMy_n+y_{n+1}^TMy_{n+1})+\bar{U}(y_n,y_{n+1}).
\end{equation} 
\end{theorem}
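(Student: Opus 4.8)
The plan is to read ``preserves'' in the conservative ($J$ skew-symmetric) setting as the two-step identity $\bar{H}(y_{n+1},y_{n+2})=\bar{H}(y_n,y_{n+1})$, and to show the increment vanishes. Setting $\Delta:=\bar{H}(y_{n+1},y_{n+2})-\bar{H}(y_n,y_{n+1})$ and using \eqref{Penergy}, the quadratic parts telescope to $\tfrac14(y_{n+2}^{T}My_{n+2}-y_n^{T}My_n)$, while the polarized potential combines into $\bar{U}(y_{n+1},y_{n+2})-\bar{U}(y_n,y_{n+1})$, which by the first relation in \eqref{discrete eq} (with $x=y_n$, $y=y_{n+1}$, $z=y_{n+2}$) equals $\tfrac12(y_{n+2}-y_n)^{T}\overline{\nabla}\bar{U}(y_n,y_{n+1},y_{n+2})$. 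Abbreviating $E:=\exp(2hJM)$, $\Phi:=2h\phi(2hJM)J$ and $g:=\overline{\nabla}\bar{U}(y_n,y_{n+1},y_{n+2})$, scheme \eqref{approx EP exponential integrator} reads $y_{n+2}=Ey_n+\Phi g$, so that $y_{n+2}-y_n=(E-I)y_n+\Phi g$; substituting this everywhere reduces $\Delta$ to an expression bilinear and quadratic in $y_n$ and $g$.

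After substitution I would group $\Delta$ into three pieces. The part quadratic in $y_n$ is $\tfrac14 y_n^{T}(E^{T}ME-M)y_n$, which vanishes by Lemma~\ref{lemma} with $p=2$ because $J$ is skew-symmetric. The cross terms in $y_n$ and $g$ collect to $\tfrac12 y_n^{T}\bigl(E^{T}M\Phi+E^{T}-I\bigr)g$, and the terms quadratic in $g$ collect (using that a scalar equals its transpose, so $g^{T}\Phi^{T}g=g^{T}\Phi g$) to $\tfrac14 g^{T}\bigl(\Phi^{T}M\Phi+\Phi+\Phi^{T}\bigr)g$. It then suffices to establish the two matrix identities $E^{T}M\Phi=I-E^{T}$ and $\Phi^{T}M\Phi+\Phi+\Phi^{T}=0$.

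Both follow from elementary matrix-function manipulations that respect the noncommutativity of $J$ and $M$. From $M(JM)^{k}=(MJ)^{k}M$ and $(JM)^{k}J=J(MJ)^{k}$ one obtains, for any power series $\psi$, the commutation rules $M\psi(JM)=\psi(MJ)M$ and $\psi(JM)J=J\psi(MJ)$; together with $\phi(w)w=e^{w}-1$ these give $M\Phi=\exp(2hMJ)-I=:F-I$ and, on transposing, $\Phi^{T}M=F^{T}-I=E^{-1}-I$, since $F^{T}=\exp(-2hJM)=E^{-1}$. Because $J$ is skew-symmetric one also has $(JM)^{T}=-MJ$, hence $E^{T}=\exp(-2hMJ)=F^{-1}$, whence $E^{T}M\Phi=E^{T}(F-I)=I-E^{T}$ and the cross terms cancel. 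For the quadratic-in-$g$ block, $\Phi^{T}M\Phi=(\Phi^{T}M)\Phi=(E^{-1}-I)\Phi$, so $\Phi^{T}M\Phi+\Phi+\Phi^{T}=E^{-1}\Phi+\Phi^{T}$; finally the scalar identity $e^{-w}\phi(w)=\phi(-w)$ yields $E^{-1}\Phi=2h\phi(-2hJM)J=2hJ\phi(-2hMJ)=-\Phi^{T}$, so this block vanishes too. Therefore $\Delta=0$.

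The main obstacle is the bookkeeping of the third step: because neither $M$ nor $J$ need be invertible, one cannot cancel $\Phi$ from the identities and must instead keep the products $M\Phi$, $\Phi^{T}M$, $E^{-1}\Phi$ intact while chaining the commutation rules in the correct order. Skew-symmetry of $J$ enters decisively through $(JM)^{T}=-MJ$, which makes $E^{T}=F^{-1}$ and gives the scalar pairing $e^{-w}\phi(w)=\phi(-w)$ exactly the form needed for both cancellations; the dissipative (symmetric, negative semidefinite $J$) case does not share these relations, so the corresponding monotone-decay statement for $\bar{H}$ must be argued separately rather than read off from this computation.
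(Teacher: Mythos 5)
Your proof is correct, but it takes a genuinely different route from the paper's. The paper first assumes $M$ non-singular, introduces the normalized gradient $\overline{\nabla}\tilde{U}=M^{-1}\overline{\nabla}\bar{U}$, and completes the square so that the increment becomes $\frac{1}{4}(y_n+\overline{\nabla}\tilde{U})^T\bigl(\exp(V)^TM\exp(V)-M\bigr)(y_n+\overline{\nabla}\tilde{U})+\frac{1}{4}\overline{\nabla}\tilde{U}^T\bigl(\exp(V)^TM-M\exp(V)\bigr)\overline{\nabla}\tilde{U}$, which vanishes by Lemma~\ref{lemma} and the skew-symmetry of $\exp(V)^TM-M\exp(V)$; singular $M$ is then handled by a separate perturbation argument $M_\epsilon\to M$. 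You instead group the increment by powers of $g=\overline{\nabla}\bar{U}$ and kill the cross and quadratic-in-$g$ blocks by proving the matrix identities $E^TM\Phi=I-E^T$ and $\Phi^TM\Phi+\Phi+\Phi^T=0$ directly from the commutation rules $M\psi(JM)=\psi(MJ)M$, $\psi(JM)J=J\psi(MJ)$ and the scalar identities $\phi(w)w=e^w-1$, $e^{-w}\phi(w)=\phi(-w)$; since you never invert $M$, your argument covers singular $M$ verbatim and the paper's limiting step becomes unnecessary — a real simplification. The trade-off is the one you flag yourself: your cancellations lean decisively on $J^T=-J$ (giving $E^T=F^{-1}$ and $E^{-1}\Phi=-\Phi^T$), so the dissipative case of Theorem~\ref{enerdissip-cubic} needs a separate argument, whereas the paper's completed-square decomposition yields both the conservative identity and the dissipative inequality from the very same computation, just by reading Lemma~\ref{lemma} with $J$ negative semidefinite instead of skew-symmetric.
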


\begin{proof}\label{proof_PEP}
We first assume that the matrix $M$ is not singular and denote by  $V=2hJM$, $M^{-1}\overline{\nabla}\bar{U}=\overline{\nabla}\tilde{U}$. The energy error has the form
\begin{equation}\label{energyer}
\begin{split}
&\bar{H}(y_{n+1},y_{n+2})-\bar{H}(y_n,y_{n+1})\\
&=\frac{1}{4}(y_{n+2}^TMy_{n+2}+y_{n+1}^TMy_{n+1})-\frac{1}{4}(y_n^TMy_n+y_{n+1}^TMy_{n+1})\\
&+\bar{U}(y_{n+1},y_{n+2})-\bar{U}(y_n,y_{n+1}).\\
\end{split}
\end{equation} 
By replacing $y_{n+2}=\text{exp}(V)y_n+2h\phi(V)J \overline{\nabla}\bar{U}(y_n,y_{n+1},y_{n+2})$ and using $\phi(V)V=\text{exp}(V)-I$, we get the following equations
\begin{equation}\label{linearEER}
\begin{split}
&\frac{1}{4}(y_{n+2}^TMy_{n+2}+y_{n+1}^TMy_{n+1})-\frac{1}{4}(y_n^TMy_n+y_{n+1}^TMy_{n+1})\\
&=\frac{1}{4}y_n^T(\text{exp}(V)^TM\text{exp}(V)-M)y_n+hy_n^T\text{exp}(V)^TM\phi(V)J\overline{\nabla}\bar{U}\\
&+h^2(\overline{\nabla}\bar{U})^TJ^T\phi(V)^TM\phi(V)J\overline{\nabla}\bar{U}\\
&=\frac{1}{4}y_n^T(\text{exp}(V)^TM\text{exp}(V)-M)y_n+\frac{1}{2}y_n^T\text{exp}(V)^TM(\text{exp}(V)-I)\overline{\nabla}\tilde{U}\\
&+\frac{1}{4}(\overline{\nabla}\tilde{U})^T(\text{exp}(V)-I)^TM(\text{exp}(V)-I)\overline{\nabla}\tilde{U},
\end{split}
\end{equation} 
and
\begin{equation}\label{nonlinearEER}
\begin{split}
&\bar{U}(y_{n+1},y_{n+2})-\bar{U}(y_n,y_{n+1})\\
&=\frac{(y_{n+2}-y_n)^T}{2}\overline{\nabla}\bar{U}(y_n,y_{n+1},y_{n+2})\\
&=\frac{1}{2}y_n^T(\text{exp}(V)^T-I)\overline{\nabla}\bar{U}+h(\overline{\nabla}\bar{U})^TJ^T\phi(V)^T\overline{\nabla}\bar{U}\\
&=\frac{1}{2}y_n^T(\text{exp}(V)^TM-M)\overline{\nabla}\tilde{U}+\frac{1}{2}\overline{\nabla}\tilde{U}^TV^T\phi(V)^TM\overline{\nabla}\tilde{U}\\
&=\frac{1}{2}y_n^T(\text{exp}(V)^TM-M)\overline{\nabla}\tilde{U}+\frac{1}{2}\overline{\nabla}\tilde{U}^T(\text{exp}(V)^TM-M)\overline{\nabla}\tilde{U}.
\end{split}
\end{equation} 
Inserting equations \eqref{linearEER} and \eqref{nonlinearEER} to equation \eqref{energyer}, we obtain the following results
\begin{equation}\label{energyer final}
\begin{split}
&\bar{H}(y_{n+1},y_{n+2})-\bar{H}(y_n,y_{n+1})\\
&=\frac{1}{4}y_n^T(\text{exp}(V)^TM\text{exp}(V)-M)y_n+\frac{1}{2}y_n^T(\text{exp}(V)^TM\text{exp}(V)-M)\overline{\nabla}\tilde{U}\\
&+\frac{1}{4}\overline{\nabla}\tilde{U}^T(\text{exp}(V)^TM\text{exp}(V)-M)\overline{\nabla}\tilde{U}+\frac{1}{4}\overline{\nabla}\tilde{U}^T(\text{exp}(V)^TM-M\text{exp}(V))\overline{\nabla}\tilde{U}\\
&=\frac{1}{4}(y_n+\overline{\nabla}\tilde{U})^T(\text{exp}(V)^TM\text{exp}(V)-M)(y_n+\overline{\nabla}\tilde{U})\\
&+\frac{1}{4}\overline{\nabla}\tilde{U}^T(\text{exp}(V)^TM-M\text{exp}(V))\overline{\nabla}\tilde{U}\\
&=0,
\end{split}
\end{equation} 
where the last step follows from the fact that $\text{exp}(V)^TM-M\text{exp}(V)$ is skew-symmetric, and $\text{exp}(V)^TM\text{exp}(V)-M$ is also skew-symmetric from Lemma \ref{lemma}.

For a singular $M$, one can find a series of non-singular and symmetric matrices $M_\epsilon$ such that $M_\epsilon= M$ when $\epsilon\rightarrow 0$. For any $M_\epsilon$, we can follow the proof above and show that the polarized energy function in the form
\begin{equation*}\label{PH with epsilon}
\bar{H_\epsilon}(y^\epsilon_n,y^\epsilon_{n+1})=\frac{1}{4}({y^\epsilon_n}^TM_\epsilon y^\epsilon_n+{y^\epsilon_{n+1}}^TM_\epsilon y^\epsilon_{n+1})+\bar{U}(y^\epsilon_n,y^\epsilon_{n+1})
\end{equation*} 
is preserved by the approximation given by 
$$y^\epsilon_{n+2}=\text{exp}(2hJM_\epsilon)y_n^\epsilon+2h\phi(2hJM_\epsilon)J \overline{\nabla}\bar{U}(y_{n}^\epsilon,y_{n+1}^\epsilon,y_{n+2}^\epsilon)$$
for the following problem 
\begin{equation*}\label{}
\dot{y}(t)=JM_\epsilon y+J\nabla U(y(t)),\quad y(t_0)=y_{0}.
\end{equation*} 
Therefore, $\bar{H_\epsilon}(y^\epsilon_n,y^\epsilon_{n+1})=\bar{H}(y_n,y_{n+1})$ is preserved by method \eqref{approx EP exponential integrator} when $\epsilon\rightarrow 0$.

\end{proof}
For problems \eqref{Hamilonian stiff equation} with higher-order polynomial energy functions, similar conservation property can be obtained by scheme \eqref{approx EP exponential integrator_higherorder}, as shown in the following corollary.
\begin{corollary}\label{orerp-energy}
Scheme \eqref{approx EP exponential integrator_higherorder} preserves the following polarized energy 
\begin{equation}\label{polaEn higherorder}
\bar{H}(y_{n},\cdots y_{n+p-1})=\frac{1}{2p}\sum_{i=0}^{p-1}y_{n+i}^TMy_{n+i}+\bar{U}(y_{n},\cdots, y_{n+p-1}).
\end{equation} 
\end{corollary}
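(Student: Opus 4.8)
The plan is to follow the structure of the proof of Theorem~\ref{enerPre-cubic} line for line, replacing $V=2hJM$ by $V=phJM$ and upgrading the single telescoping cancellation to one over $p$ terms. As there, I would first assume $M$ nonsingular, write $\overline{\nabla}\bar{U}$ for $\overline{\nabla}\bar{U}(y_n,\ldots,y_{n+p})$ and set $\overline{\nabla}\tilde{U}=M^{-1}\overline{\nabla}\bar{U}$, and then split the one-step energy change
\[
\bar{H}(y_{n+1},\ldots,y_{n+p})-\bar{H}(y_{n},\ldots,y_{n+p-1})
\]
into its quadratic part and its $\bar{U}$-part. The decisive simplification is that the quadratic sum in \eqref{polaEn higherorder} telescopes: the intermediate terms $y_{n+1}^TMy_{n+1},\ldots,y_{n+p-1}^TMy_{n+p-1}$ occur in both energies and cancel, leaving only $\tfrac{1}{2p}(y_{n+p}^TMy_{n+p}-y_n^TMy_n)$. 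This is precisely the role played by the endpoints $(y_n,y_{n+2})$ in the cubic case, so the computation again reduces to two endpoints irrespective of $p$.

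For the quadratic endpoint difference I would substitute $y_{n+p}$ from scheme \eqref{approx EP exponential integrator_higherorder} and use $\phi(V)V=\text{exp}(V)-I$ in the form $ph\,\phi(V)J\,\overline{\nabla}\bar{U}=(\text{exp}(V)-I)\overline{\nabla}\tilde{U}$, which expresses $\tfrac{1}{2p}(y_{n+p}^TMy_{n+p}-y_n^TMy_n)$ as a quadratic form in $y_n$ and $\overline{\nabla}\tilde{U}$ built from $\text{exp}(V)^TM\text{exp}(V)-M$ and $\text{exp}(V)^TM$. For the nonlinear part I would invoke the generalized polarized discrete gradient identity \eqref{discrete eq_higherorder}, writing $\bar{U}(y_{n+1},\ldots,y_{n+p})-\bar{U}(y_{n},\ldots,y_{n+p-1})=\tfrac{1}{p}(y_{n+p}-y_n)^T\overline{\nabla}\bar{U}$ and substituting $y_{n+p}-y_n=(\text{exp}(V)-I)(y_n+\overline{\nabla}\tilde{U})$. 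Both contributions are then homogeneous quadratics in $y_n+\overline{\nabla}\tilde{U}$ and in $\overline{\nabla}\tilde{U}$.

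The step I expect to require the most care is the bookkeeping that combines the prefactor $\tfrac{1}{2p}$ of the quadratic part with the prefactor $\tfrac{1}{p}$ of the discrete-gradient identity and shows that the cross terms recombine into a perfect square. Concretely, I would check that the total collapses to
\[
\tfrac{1}{2p}\big[(y_n+\overline{\nabla}\tilde{U})^T(\text{exp}(V)^TM\text{exp}(V)-M)(y_n+\overline{\nabla}\tilde{U})+\overline{\nabla}\tilde{U}^T(\text{exp}(V)^TM-M\text{exp}(V))\overline{\nabla}\tilde{U}\big],
\]
which is the $p$-independent analogue of the final identity in the cubic proof; the only change from $p=2$ is the outer constant $\tfrac{1}{2p}$, so the two cross-term contributions must cancel in exactly the same way, and this is the point I would verify explicitly. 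Once this form is reached the conclusion is immediate: Lemma~\ref{lemma} gives $\text{exp}(V)^TM\text{exp}(V)-M=0$ when $J$ is skew-symmetric, while $\text{exp}(V)^TM-M\text{exp}(V)$ is skew-symmetric for any symmetric $M$, so both quadratic forms vanish and $\bar{H}$ is exactly conserved. (When $J$ is negative semidefinite the first matrix is negative semidefinite by the same lemma, so the whole expression is nonpositive and yields the dissipative Lyapunov decrease.) The singular-$M$ case is then settled by the identical limiting argument used in Theorem~\ref{enerPre-cubic}, approximating $M$ by nonsingular symmetric $M_\epsilon\to M$ and passing to the limit.
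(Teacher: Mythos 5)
Your proposal is correct and is essentially the paper's own argument: the paper omits this proof with the remark that it is similar to Theorem~\ref{enerPre-cubic}, and your line-for-line generalization --- telescoping the quadratic sum so only the endpoint terms $\tfrac{1}{2p}(y_{n+p}^TMy_{n+p}-y_n^TMy_n)$ survive, substituting the scheme with $V=phJM$ so that $ph\,\phi(V)J\,\overline{\nabla}\bar{U}=(\exp(V)-I)\overline{\nabla}\tilde{U}$, and combining with the $\tfrac{1}{p}$-weighted identity \eqref{discrete eq_higherorder} --- is exactly that ``similar'' proof, worked out. The delicate bookkeeping you flagged does check out: the cross terms sum to $\tfrac{1}{p}\,y_n^T(\exp(V)^TM\exp(V)-M)\overline{\nabla}\tilde{U}$ and the $\overline{\nabla}\tilde{U}$-quadratic terms to $\tfrac{1}{2p}\overline{\nabla}\tilde{U}^T(\exp(V)^TM\exp(V)-M+\exp(V)^TM-M\exp(V))\overline{\nabla}\tilde{U}$, giving precisely the claimed $p$-independent perfect-square form with outer constant $\tfrac{1}{2p}$, after which Lemma~\ref{lemma} (stated for $\exp(phJM)$, so directly applicable) finishes the argument.
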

The proof is similar to Theorem \ref{enerPre-cubic} and thus is omitted here.
\begin{theorem}\label{enerdissip-cubic}
If $J$ is a constant negative semidefinite matrix, scheme \eqref{approx EP exponential integrator} preserves the polarized Lyapunov function $\bar{H}$ for problem \eqref{Hamilonian stiff equation}:
$$\bar{H}(y_{n+1},y_{n+2})\le \bar{H}(y_{n+1},y_{n}),$$
where $\bar{H}(y_n,y_{n+1})$ is defined  by equation \eqref{Penergy}.
%$\bar{H}(y_n,y_{n+1})=\frac{1}{4}(y_n^TMy_n+y_{n+1}^TMy_{n+1})+\bar{U}(y_n,y_{n+1})$.
\end{theorem}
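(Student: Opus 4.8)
The plan is to reuse almost verbatim the algebraic identity already established in the proof of Theorem \ref{enerPre-cubic} and to change only the final sign argument. First I would observe that the entire chain of computations from \eqref{energyer} down to \eqref{energyer final}, which expresses the one-step increment as
\begin{equation*}
\bar{H}(y_{n+1},y_{n+2})-\bar{H}(y_n,y_{n+1})=\frac{1}{4}(y_n+\overline{\nabla}\tilde{U})^T B\,(y_n+\overline{\nabla}\tilde{U})+\frac{1}{4}\overline{\nabla}\tilde{U}^T C\,\overline{\nabla}\tilde{U},
\end{equation*}
with $V=2hJM$, $\overline{\nabla}\tilde{U}=M^{-1}\overline{\nabla}\bar{U}$, $B=\text{exp}(V)^TM\text{exp}(V)-M$ and $C=\text{exp}(V)^TM-M\text{exp}(V)$, never used the skew-symmetry of $J$. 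Those manipulations rely only on $M$ being symmetric, on $V=2hJM$ (equivalently $JM=\tfrac{1}{2h}V$), and on the identity $\phi(V)V=\text{exp}(V)-I$; hence the decomposition remains valid when $J$ is negative semidefinite, and I would simply invoke it rather than rederive it.

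Next I would treat the two remaining terms separately. The matrix $C=\text{exp}(V)^TM-M\text{exp}(V)$ is skew-symmetric for any symmetric $M$, since $C^T=M\text{exp}(V)-\text{exp}(V)^TM=-C$, so the quadratic form $\overline{\nabla}\tilde{U}^T C\,\overline{\nabla}\tilde{U}$ vanishes exactly as in the conservative case. For the first term, this is where the dissipative hypothesis enters: with $p=2$ we have $B=\text{exp}(2hJM)^TM\text{exp}(2hJM)-M$, which is precisely the matrix of Lemma \ref{lemma}. Since $J$ is negative semidefinite, the lemma now yields that $B$ is negative semidefinite rather than zero, whence $\tfrac{1}{4}(y_n+\overline{\nabla}\tilde{U})^TB\,(y_n+\overline{\nabla}\tilde{U})\le 0$. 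Combining the two contributions gives $\bar{H}(y_{n+1},y_{n+2})-\bar{H}(y_n,y_{n+1})\le 0$; and because the quadratic part of $\bar{H}$ in \eqref{Penergy} is symmetric in its arguments and $\bar{U}$ is a symmetric polarization, $\bar{H}(y_n,y_{n+1})=\bar{H}(y_{n+1},y_n)$, which produces the stated inequality.

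Finally, I would reproduce the regularization argument of Theorem \ref{enerPre-cubic} to lift the invertibility assumption on $M$ needed to define $\overline{\nabla}\tilde{U}$: approximate $M$ by nonsingular symmetric matrices $M_\epsilon\to M$, apply the inequality to the perturbed scheme (whose matrix $B_\epsilon$ is negative semidefinite by Lemma \ref{lemma} applied to $M_\epsilon$), and pass to the limit $\epsilon\to 0$, using that non-positivity is preserved under limits. The main, and essentially only, obstacle is conceptual rather than computational: one must verify carefully that the derivation of \eqref{energyer final} is genuinely independent of the algebraic structure of $J$, so that the identical decomposition is available and the sole substantive change is replacing the conclusion $B=0$ of the conservative case by the negative semidefiniteness of $B$. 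No new estimate beyond this sign observation is required.
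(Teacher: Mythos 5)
Your proposal is correct and takes essentially the same route as the paper's own proof: it reuses the decomposition \eqref{energyer final} from Theorem \ref{enerPre-cubic} (whose derivation indeed nowhere uses skew-symmetry of $J$), discards the vanishing quadratic form of the skew-symmetric matrix $\text{exp}(V)^TM-M\text{exp}(V)$, invokes Lemma \ref{lemma} to get negative semidefiniteness of $\text{exp}(V)^TM\text{exp}(V)-M$, and handles singular $M$ by the same $M_\epsilon$ regularization. Your explicit observation that $\bar{H}(y_n,y_{n+1})=\bar{H}(y_{n+1},y_n)$ (needed to match the argument ordering in the theorem statement) is a small point the paper leaves implicit.
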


\begin{proof}
Let us suppose $M$ to be non-singular; otherwise we follow the similar procedure in the proof for Theorem \ref{enerPre-cubic}, i.e., constructing a series of $M_\epsilon$ convergent to $M$ to achieve the result.

For a constant negative semidefinite matrix $J$, the error of Lyapnov function has the same form as the energy error in \eqref{energyer final}:
\begin{equation*}\label{Lyapunov final}
\begin{split}
&\bar{H}(y_{n+1},y_{n+2})-\bar{H}(y_n,y_{n+1})\\
&=\frac{1}{4}(y_{n+2}^TMy_{n+2}+y_{n+1}^TMy_{n+1})-\frac{1}{4}(y_n^TMy_n+y_{n+1}^TMy_{n+1})\\
&+\bar{U}(y_{n+1},y_{n+2})-\bar{U}(y_n,y_{n+1})\\
&=\frac{1}{4}(y_n+\overline{\nabla}\tilde{U})^T(\text{exp}(V)^TM\text{exp}(V)-M)(y_n+\overline{\nabla}\tilde{U})\\
&+\frac{1}{4}\overline{\nabla}\tilde{U}^T(\text{exp}(V)^TM-M\text{exp}(V))\overline{\nabla}\tilde{U}\\
&\le 0,
\end{split}
\end{equation*} 
where the last step follows from the fact that $\text{exp}(V)^TM-M\text{exp}(V)$ is skew-symmetric, and  $\text{exp}(V)^TM\text{exp}(V)-M$ is negative semidefinite according to Lemma \ref{lemma}.

\end{proof}
Similarly, for problems \eqref{Hamilonian stiff equation} with a higher-order polynomial  energy, we have the following corollary.
\begin{corollary}
If $J$ is a constant negative semidefinite matrix, scheme \eqref{approx EP exponential integrator_higherorder} preserves the polarized Lyapunov function:
$$\bar{H}(y_{n+1},\cdots, y_{n+p})\le \bar{H}(y_{n},\cdots, y_{n+p-1}),$$
where $\bar{H}(y_{n},\cdots y_{n+p-1})$ has the same form as equation \eqref{polaEn higherorder}.
\end{corollary}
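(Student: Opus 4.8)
The plan is to mirror the proof of Theorem \ref{enerdissip-cubic}, generalizing its two-step computation to the $p$-step scheme \eqref{approx EP exponential integrator_higherorder}. As in the earlier proofs, I would first assume $M$ nonsingular and dispose of the singular case at the end by the same limiting argument used there (approximate $M$ by a sequence of nonsingular symmetric matrices $M_\epsilon \to M$, apply the nonsingular result, and pass to the limit). Throughout I write $V=phJM$ and $\overline{\nabla}\tilde U=M^{-1}\overline{\nabla}\bar U$.

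First I would split the increment $\bar H(y_{n+1},\cdots,y_{n+p})-\bar H(y_{n},\cdots,y_{n+p-1})$ into a quadratic part and a nonlinear part using \eqref{polaEn higherorder}. The decisive simplification is that the quadratic part telescopes: the intermediate states $y_{n+1},\dots,y_{n+p-1}$ enter both arguments of $\bar H$ with the identical coefficient $\frac{1}{2p}$, so they all cancel and only the boundary term $\frac{1}{2p}(y_{n+p}^TMy_{n+p}-y_n^TMy_n)$ survives. For the nonlinear part, the defining relation \eqref{discrete eq_higherorder} of the polarized discrete gradient yields exactly $\bar U(y_{n+1},\cdots,y_{n+p})-\bar U(y_{n},\cdots,y_{n+p-1})=\frac1p(y_{n+p}-y_n)^T\overline{\nabla}\bar U$. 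Thus the entire increment depends only on the pair $(y_n,y_{n+p})$, and the problem collapses onto precisely the structure already treated for $p=2$.

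Next I would substitute the scheme $y_{n+p}=\text{exp}(V)y_n+ph\,\phi(V)J\overline{\nabla}\bar U$ into both surviving terms, use $\phi(V)V=\text{exp}(V)-I$ together with $J=VM^{-1}/(ph)$ to rewrite $ph\,\phi(V)J\overline{\nabla}\bar U=(\text{exp}(V)-I)\overline{\nabla}\tilde U$, and then expand and collect. The cross terms between the quadratic and nonlinear contributions combine using the symmetry of $B:=\text{exp}(V)^TM\text{exp}(V)-M$, and one arrives at exactly the final expression of \eqref{energyer final} with $\tfrac14$ replaced by $\tfrac{1}{2p}$ and $\tfrac12$ by $\tfrac1p$:
\begin{equation*}
\frac{1}{2p}(y_n+\overline{\nabla}\tilde U)^T(\text{exp}(V)^TM\text{exp}(V)-M)(y_n+\overline{\nabla}\tilde U)+\frac{1}{2p}\overline{\nabla}\tilde U^T(\text{exp}(V)^TM-M\text{exp}(V))\overline{\nabla}\tilde U.
\end{equation*}

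Finally I would conclude as in the two-step case. The second term vanishes because $\text{exp}(V)^TM-M\text{exp}(V)$ is skew-symmetric, so its associated quadratic form is zero. For the first term, Lemma \ref{lemma} applied with $V=phJM$ guarantees that $B=\text{exp}(V)^TM\text{exp}(V)-M$ is negative semidefinite whenever $J$ is negative semidefinite, so that term is $\le 0$; hence $\bar H(y_{n+1},\cdots,y_{n+p})\le \bar H(y_{n},\cdots,y_{n+p-1})$. The only step needing genuine care --- and the main obstacle --- is verifying the two reductions of the second paragraph: that the quadratic sum telescopes cleanly to a boundary term, and that \eqref{discrete eq_higherorder} supplies the nonlinear difference in exactly the boundary form $\frac1p(y_{n+p}-y_n)^T\overline{\nabla}\bar U$. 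Once these are in hand the remaining algebra is identical to the $p=2$ proof, and the negative-semidefinite conclusion follows verbatim from Lemma \ref{lemma}.
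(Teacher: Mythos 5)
Your proposal is correct and is exactly the argument the paper intends: the paper omits the proof, stating it is "similar to Theorem \ref{enerdissip-cubic}", and your write-up carries out precisely that generalization — telescoping the quadratic sum to the boundary term $\frac{1}{2p}(y_{n+p}^TMy_{n+p}-y_n^TMy_n)$, invoking \eqref{discrete eq_higherorder} for the nonlinear difference, substituting the scheme with $V=phJM$, and concluding via the skew-symmetry of $\text{exp}(V)^TM-M\text{exp}(V)$ and the negative semidefiniteness of $\text{exp}(V)^TM\text{exp}(V)-M$ from Lemma \ref{lemma}, with the singular-$M$ case handled by the same $M_\epsilon$ limiting argument. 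The algebra you outline (coefficients $\tfrac{1}{2p}$ and $\tfrac{1}{p}$ replacing $\tfrac14$ and $\tfrac12$) checks out.
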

The proof is similar to Theorem \ref{enerdissip-cubic} and thus is omitted here.

\begin{theorem}\label{LIEEP_symmetric}
Scheme \eqref{approx EP exponential integrator} is symmetric.
\end{theorem}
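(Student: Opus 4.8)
The plan is to establish symmetry in the sense of time‑reversibility: scheme \eqref{approx EP exponential integrator} is symmetric precisely when it is invariant under the simultaneous substitution $h \mapsto -h$ together with the reversal of the index order $y_n \leftrightarrow y_{n+2}$ (keeping the midpoint $y_{n+1}$ fixed). Equivalently, I would take the one‑step relation \eqref{approx EP exponential integrator}, solve it for the ``old'' value $y_n$ in terms of $y_{n+1}$ and $y_{n+2}$, and check that the resulting map is exactly the scheme advancing one step of length $-h$ starting from the pair $(y_{n+2},y_{n+1})$.

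Concretely, writing $V=2hJM$ and rearranging \eqref{approx EP exponential integrator} into
\[
\text{exp}(V)\,y_n = y_{n+2} - 2h\,\phi(V)J\,\overline{\nabla}\bar U(y_n,y_{n+1},y_{n+2}),
\]
I would left‑multiply by $\text{exp}(-V)$ and invoke the scalar identity
\[
e^{-z}\phi(z) = e^{-z}\,\frac{e^{z}-1}{z} = \frac{1-e^{-z}}{z} = \phi(-z),
\]
which holds as an identity of entire functions and hence, by functional calculus, for the matrix argument $V$. This yields
\[
y_n = \text{exp}(-V)\,y_{n+2} - 2h\,\phi(-V)J\,\overline{\nabla}\bar U(y_n,y_{n+1},y_{n+2}).
\]
Since $\text{exp}(-V)=\text{exp}\big(2(-h)JM\big)$ and $-2h\,\phi(-V)=2(-h)\,\phi\big(2(-h)JM\big)$, the right‑hand side is literally scheme \eqref{approx EP exponential integrator} with step size $-h$, the \emph{only} discrepancy being that the polarized discrete gradient still carries its arguments in the order $(y_n,y_{n+1},y_{n+2})$ rather than the reversed order $(y_{n+2},y_{n+1},y_n)$ demanded by a backward step.

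Hence symmetry reduces to the single identity
\[
\overline{\nabla}\bar U(x,y,z) = \overline{\nabla}\bar U(z,y,x),
\]
and verifying this is the step I expect to be the main obstacle. All the polarizations $\bar U$ listed after \eqref{discrete eq} are symmetric in their arguments (e.g. $\bar U(x,y)=x\frac{x+y}{2}y=\bar U(y,x)$), and swapping $x\leftrightarrow z$ in the first line of \eqref{discrete eq} shows that $\overline{\nabla}\bar U(x,y,z)$ and $\overline{\nabla}\bar U(z,y,x)$ have the same inner product with $z-x$; however, this alone does not force the two vectors to coincide. Full equality does not follow from the defining conditions \eqref{discrete eq} but depends on the specific symmetric construction of the polarized discrete gradient borrowed from the cited references. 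I would therefore close the argument by writing $\overline{\nabla}\bar U$ out componentwise for the cubic polarization (and its higher‑order analogue underlying Remark \ref{orderp-scheme}) and reading off the invariance under reversal of the argument list directly, which then completes the proof of symmetry.
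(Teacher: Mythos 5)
Your proposal is correct and is essentially the paper's own argument: rearrange the scheme to express $y_n$, invoke the identity $\exp(-V)\phi(V)=\phi(-V)$ (the paper writes it as $\exp(V)\phi(-V)=\phi(V)$), and thereby reduce symmetry to the single reversal identity $\overline{\nabla}\bar{U}(y_{n+2},y_{n+1},y_n)=\overline{\nabla}\bar{U}(y_n,y_{n+1},y_{n+2})$, which the paper obtains by appealing to the definition \eqref{discrete eq} together with the (cyclic) permutation-free property of the polarized energy. Your further scruple is well placed and slightly sharper than the paper's treatment: as you note, the conditions \eqref{discrete eq} underdetermine $\overline{\nabla}\bar{U}$ (they only fix its inner product with $y_{n+2}-y_n$), so the reversal identity is genuinely a property of the concrete permutation-free construction --- exactly what your proposed componentwise verification, or the paper's citation plus its later remark on permutation-free polarizations, supplies.
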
\label{proof-LIEEP_symmetric}
\begin{proof}
Exchanging $y_{n}$, $y_{n+1}$ $\leftrightarrow$ $y_{n+2} $, $ y_{n+1}$ and replacing $2h$ by $-2h$ in \eqref{approx EP exponential integrator}, we obtain
 \begin{equation*}\label{symmetric}
y_n=\text{exp}(-V)y_{n+2}-2h\phi(-V)J\overline{\nabla}\bar{U}(y_{n+2},y_{n+1},y_{n}).
\end{equation*} 
Following from the definition in \eqref{discrete eq} and the  cyclic permutation free property of the polarized energy \cite{dahlby2011general},
%i.e., 
% \begin{equation*}\label{symmetric_PH}
%\bar{U}(y_{n+1},y_{n})=\bar{U}(y_{n},y_{n+1}), \quad \bar{U}(y_{n+2},y_{n+1})=\bar{U}(y_{n+1},y_{n+2}),
 %\end{equation*}
 we get 
 \begin{equation}\label{symmetric_PDG-}
\overline{\nabla}\bar{U}(y_{n+2},y_{n+1},y_{n})=\overline{\nabla}\bar{U}(y_{n},y_{n+1},y_{n+2}).
 \end{equation}
Using $\text{exp}(V)\phi(-V)=\phi(V)$ and the relation in \eqref{symmetric_PDG-}, we then obtain
 \begin{equation*}\label{symmetric}
y_{n+2}=\text{exp}(V)y_{n}+2h\phi(V)J\overline{\nabla}\bar{U}(y_{n},y_{n+1},y_{n+2}).
\end{equation*} 
\end{proof}

%Different with method \eqref{approx EP exponential integrator}, 
Scheme \eqref{approx EP exponential integrator_higherorder} for problems with higher-order polynomial $U(y)$ also holds the symmetric property if the polarization of the function $U(y)$ is invariant when the order of its arguments is reversed. This can be obtained  by symmetrizing over dihedral group \cite{dahlby2011general}. Although only cyclic permutation free is required in the definition of the polarized energy,  we can actually always get a permutation free quadratic polarization for any higher order polynomial $\bar{U}(y)$. In fact,  the polarization examples shown above are all permutation free. In this paper, we always consider permutation free polarization, i.e., quadratic polarization satisfying the following  condition
%$$\bar{U}(y_{n},\cdots,y_{n+i},\cdots y_{n+p-1})=\underset{(i_1,\cdots,i_p)\in\pi}{\bar{U}}(y_{n+i_1-1},\cdots y_{n++i_p-1}).$$
\begin{equation*}\label{permutation free}
\bar{U}(y_{n},\cdots y_{n+p-1})=\bar{U}(y_{n+i_1-1},\cdots y_{n+i_p-1}),\quad (i_1,\cdots,i_p)\in S_p,
\end{equation*} 
where $S_p$ is a symmetric group.
%For $p=3$, quadratic polarization is always permutation free.  
%Based on the restriction \eqref{permutation free}, we can get the following corollary.

\begin{corollary}
Scheme \eqref{approx EP exponential integrator_higherorder} 
is symmetric if the polarization of function $U(y)$ is permutation free.
\end{corollary}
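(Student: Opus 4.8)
The plan is to mirror the proof of Theorem \ref{LIEEP_symmetric}, now carried out on the full $(p+1)$-node stencil. Writing $W:=phJM$, scheme \eqref{approx EP exponential integrator_higherorder} reads $y_{n+p}=\text{exp}(W)y_n+ph\,\phi(W)J\,\overline{\nabla}\bar{U}(y_n,\ldots,y_{n+p})$. To test symmetry I would apply the adjoint substitution: reverse the entire stencil, i.e. interchange $y_{n+i}\leftrightarrow y_{n+p-i}$ for $i=0,\ldots,p$, and simultaneously replace $h$ by $-h$ (equivalently $W$ by $-W$). This produces the relation $y_n=\text{exp}(-W)y_{n+p}-ph\,\phi(-W)J\,\overline{\nabla}\bar{U}(y_{n+p},\ldots,y_n)$, which is the exact analogue of the intermediate identity appearing in the $p=2$ proof.

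From here the argument is the same two-step reduction as before. First, I would invoke the reversal invariance of the polarized discrete gradient, $\overline{\nabla}\bar{U}(y_{n+p},\ldots,y_n)=\overline{\nabla}\bar{U}(y_n,\ldots,y_{n+p})$, to replace the reversed gradient by the original one. Second, using the identity $\text{exp}(W)\phi(-W)=\phi(W)$ and multiplying the adjoint relation on the left by $\text{exp}(W)$, one recovers $y_{n+p}=\text{exp}(W)y_n+ph\,\phi(W)J\,\overline{\nabla}\bar{U}(y_n,\ldots,y_{n+p})$, i.e. scheme \eqref{approx EP exponential integrator_higherorder} itself. This shows the method coincides with its adjoint and is therefore symmetric.

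The main obstacle, and the only place where the permutation-free hypothesis is actually used, is justifying the reversal invariance $\overline{\nabla}\bar{U}(y_{n+p},\ldots,y_n)=\overline{\nabla}\bar{U}(y_n,\ldots,y_{n+p})$. For $p=2$ this followed from $\bar{U}(x,y)=\bar{U}(y,x)$, but for general $p$ the defining relations \eqref{discrete eq_higherorder} only pin down the projection of $\overline{\nabla}\bar{U}$ onto $y_{n+p}-y_n$: running \eqref{discrete eq_higherorder} with the reversed labels and using $\bar{U}(y_{n+p-1},\ldots,y_n)=\bar{U}(y_n,\ldots,y_{n+p-1})$ (permutation-freeness) shows only that the vectors $\overline{\nabla}\bar{U}(y_{n+p},\ldots,y_n)$ and $\overline{\nabla}\bar{U}(y_n,\ldots,y_{n+p})$ have the same inner product with $y_{n+p}-y_n$, not that they are equal. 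The full vector identity must therefore be read off from the explicit construction of the polarized discrete gradient following \cite{dahlby2011general}: since that construction assembles $\overline{\nabla}\bar{U}$ from partial differences of the symmetric polarization $\bar{U}$ in a way that treats all time levels on an equal footing, a simultaneous reversal of the arguments of $\bar{U}$ (which leaves $\bar{U}$ unchanged under the permutation-free assumption) leaves each component of $\overline{\nabla}\bar{U}$ unchanged as well. I would make this precise by writing out the coordinatewise construction and verifying that index reversal acts as the identity on it; this bookkeeping is the delicate step, whereas the exponential-operator manipulations are routine once the gradient symmetry is established.
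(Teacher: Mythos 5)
Your proposal is correct and follows essentially the same route as the paper: the paper proves this corollary by repeating the adjoint substitution argument of Theorem \ref{LIEEP_symmetric} (reverse the stencil, replace $h$ by $-h$, use $\exp(W)\phi(-W)=\phi(W)$), with the cyclic permutation-free property replaced by the permutation-free property to obtain $\overline{\nabla}\bar{U}(y_{n+p},\ldots,y_n)=\overline{\nabla}\bar{U}(y_n,\ldots,y_{n+p})$. Your additional observation that the defining relations \eqref{discrete eq_higherorder} only fix the projection of the gradient onto $y_{n+p}-y_n$, so that the reversal invariance must be verified from the explicit construction of $\overline{\nabla}\bar{U}$, is a legitimate refinement of a point the paper leaves implicit, but it does not alter the argument's structure.
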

The proof is similarly to the proof of Theorem \ref{LIEEP_symmetric} except that the cyclic permutation free property should be replaced by the permutation free property.

\section{Numerical experiment}
The proposed method is suitable for conservative or dissipative differential equations of the form \eqref{Hamilonian stiff equation} with $J$ a constant skew-symmetric or negative semidefinite matrix and $U(y)$ a scalar polynomial function of any order. These equations include the highly oscillatory conservative or dissipative  ODEs and also the semi-discrete systems arising from PDEs.  In this section, we test our method by three differential equations. The first two examples are used to demonstrate the efficient behavior of the method compared to the fully implicit method, e.g., the energy-preserving exponential integrators based on the averaged vector field method, denoted by EAVF. The third example is chosen to show the application of the method for problems with higher-order polynomial energy functions. 

EAVF method was put forward in  \cite{Wuxinyuan2016}, which has the form
 \begin{equation}\label{EAVF}
y_{n+1}=\text{exp}(hJM)y_n+h\phi(hJM)J \overline{\nabla}U(y_n,y_{n+1}), \quad\quad %\text{(EAVF)},
\end{equation} 
where $\overline{\nabla}U(y_n,y_{n+1})=\int_{0}^1 \nabla U((1-\tau)y_n+\tau y_{n+1})d\tau$.
Besides, scheme \eqref{EAVF} has been shown to preserve the discrete energy of the form
\begin{equation}\label{EAVF-energy}
H(y_n)=\frac{1}{2}y_n^TMy_n+U(y_n).
\end{equation} 
The integral in EAVF method is evaluated by the 2-point GL quadrature formula, which gives an exact approximation of the integration. In most cases, the terms $\text{exp}(phJM)$ and $\phi(phJM)$ ($p$ is a positive integer number) can not be calculated explicitly, and  we use the
MATLAB package proposed in \cite{berland2007expint} to compute them, where Pade approximations are used.  

In the experiments, the global error is defined by 
\begin{equation*}\label{gler}
\underset{n\ge 0}{\max}\|y_n-y(t_n)\|, 
\end{equation*} 
where $t_n=t_0+nh$ with $h$ the time step size, and $y(t_n)$ is the reference exact solution.
%The energy error, including both the discrete energy in \eqref{EAVF-energy} and the polarized energy in \eqref{Penergy} and \eqref{polaEn higherorder}, is defined by 
In this work, we compute the reference solution by  the 6-order continuous Runge–Kutta (CRK) method \cite{hairer2010energy} with the form
\begin{equation*}\label{CRK6GL5}
\begin{cases}
      y_{n+1/3}=y_n+hJ\int_0^1\big(\frac{37}{27}-\frac{32}{9}\sigma+\frac{20}{9}\sigma^2\big)\nabla H(Y_\sigma)d\sigma\\
      \vspace{5pt}
     y_{n+2/3}=y_n+hJ\int_0^1\big(\frac{26}{27}+\frac{8}{9}\sigma-\frac{20}{9}\sigma^2\big)\nabla H(Y_\sigma)d\sigma\\
     y_{n+1}=y_n+hJ\int_0^1\nabla H(Y_\sigma)d\sigma
    \end{cases},
\end{equation*} 
where
 \begin{equation*}
 \begin{split}
Y_\sigma=&-\frac{(3\sigma-1)(3\sigma-2)(\sigma-1)}{2}y_n+\frac{3\sigma(3\sigma-2)(3\sigma-3)}{2}y_{n+1/3}\\
&-\frac{3\sigma(3\sigma-1)(3\sigma-3)}{2}y_{n+2/3}+\frac{\sigma(3\sigma-1)(3\sigma-2)}{2}y_{n+1},
 \end{split}
\end{equation*} 
%{\small
%$$Y_\sigma=-\frac{(3\sigma-1)(3\sigma-2)(\sigma-1)}{2}y_n+\frac{3\sigma(3\sigma-2)(3\sigma-3)}{2}y_{n+1/3}-\frac{3\sigma(3\sigma-1)(3\sigma-3)}{2}y_{n+1/2}+\frac{\sigma(3\sigma-1)(3\sigma-2)}{2}y_{n+1},$$
%} 
and the integrals are evaluated exactly by the 5-point GL quadrature.  
For all fully implicit schemes, we solve the nonlinear system   by  the fixed point iteration with tolerance as $10^{-14}$. All the numerical results presented are obtained from schemes implemented in MATLAB (2020a release), running on a MacBook Pro
with a dual-core 2.6 GHz Intel 6-Core i7 processor and 16 GB of 2667 MHz DDR4
RAM. 
%MATLAB  codes for the experiments are available online from ???

\vspace{2pt}

\noindent\textbf{Test problem one}.  We consider an averaged system in wind-induced oscillation \cite{mclachlan1998unified}
\begin{equation}\label{problem1}
\begin{split}
\dot{x}_1&=-\zeta x_1-\lambda x_2+x_1x_2,\\
\dot{x}_2&=\lambda x_1-\zeta x_2+\frac{1}{2}(x_1^2-x_2^2),
\end{split}
\end{equation} 
where $\zeta\ge 0$ is a damping factor and $\lambda$ is a detuning parameter with $\zeta=r\text{cos}(\theta)$, $\lambda=r\text{sin}(\theta)$, $r\ge 0$, $0\le\theta\le\pi/2$.
Equation \eqref{problem1} can be rewritten into the form \eqref{Hamilonian stiff equation} with 
\begin{equation*}\label{problem1 J}
J=\begin{bmatrix}
    -cos(\theta)      & -sin(\theta)   \\
    sin(\theta)      & -cos(\theta) 
\end{bmatrix}
\quad
M=\begin{bmatrix}
    r      & 0   \\
    0      & r
\end{bmatrix},
\end{equation*} 
\begin{equation*}
U=-\frac{1}{2}sin(\theta)(x_1x_2^2-\frac{1}{3}x_1^3)+\frac{1}{2}cos(\theta)(\frac{1}{3}x_2^3-x_1^2x_2).
\end{equation*}

Its energy function (when $\theta=\pi/2$) or Lyapunov function (dissipative case, when $\theta\le\pi/2$) is
\begin{equation*}\label{problem1 H}
H=\frac{1}{2}r(x_1^2+x_2^2)-\frac{1}{2}sin(\theta)(x_1x_2^2-\frac{1}{3}x_1^3)+\frac{1}{2}cos(\theta)(\frac{1}{3}x_2^3-x_1^2x_2).
\end{equation*} 
The matrix exponential in scheme \eqref{approx EP exponential integrator} for problem \eqref{problem1} can be calculated explicitly as follows
\begin{equation*}\label{exp(V)}
exp(V)=\begin{bmatrix}
    exp(-2hcr)cos(2hsr)      & -exp(-2hcr)sin(2hsr)   \\
    exp(-2hcr)sin(2hsr)      & exp(-2hcr)cos(2hsr)
\end{bmatrix},
\end{equation*} 
with $c=cos(\theta)$ and $s=sin(\theta)$.
We can obtain a polarized discrete gradient $\nabla\bar{U}(x^n,x^{n+1},x^{n+2})$ based on a polarization of $U$ given by 
 \begin{equation}\label{polarisation of U}
 \begin{split}
\bar{U}(x^n,x^{n+1})=&-\frac{1}{2}sin(\theta)\big(a\frac{x_1^n+x_1^{n+1}}{2}x_2^nx_2^{n+1}+(1-a)\frac{x_1^n(x_2^{n+1})^2+x_1^{n+1}(x_2^{n})^2}{2}\\
&-\frac{1}{3}x_1^n\frac{x_1^n+x_1^{n+1}}{2}x_1^{n+1}\big)+\frac{1}{2}cos(\theta)\big(\frac{1}{3}x_2^n\frac{x_2^n+x_2^{n+1}}{2}x_2^{n+1}\\
&-ax_1^nx_1^{n+1}\frac{x_2^n+x_2^{n+1}}{2}
-(1-a)\frac{x_2^n(x_1^{n+1})^2+x_2^{n+1}(x_1^{n})^2}{2}\big).
 \end{split}
\end{equation} 
Then we get the linearly implicit energy-preserving scheme in the form of \eqref{approx EP exponential integrator} and the polarized energy in the form of \eqref{Penergy}.

Consider the initial vector $x_1(0)=0, \quad x_2(0)=1$, $r=20$, step size $h=1/20$, and the parameters $\theta=\pi/2$ or $\theta=\pi/2-10^{-4}$. For  LIEEP method, the starting point $[x_1^1, x_2^1]$ is computed by the Matlab function \emph{ode45}. $\theta=\pi/2$ provides a conservative system, and Figure  \ref{energy upwind-conserve} confirms that EAVF method preserves the discrete energy \eqref{EAVF-energy} and LIEEP method preserves the polarized energy \eqref{Penergy}. While  $\theta=\pi/2-10^{-4}$ leads to a dissipative system, and Figure  \ref{energy upwind-dissipative} shows that EAVF method and  LIEEP method preserves the dissipation of the Lyapunov function in \eqref{EAVF-energy} and \eqref{Penergy}, respectively. 

\begin{figure}[H]
\centering
      \begin{subfigure}[b]{0.45\textwidth}
      \centering
                \includegraphics[width=0.99\textwidth]{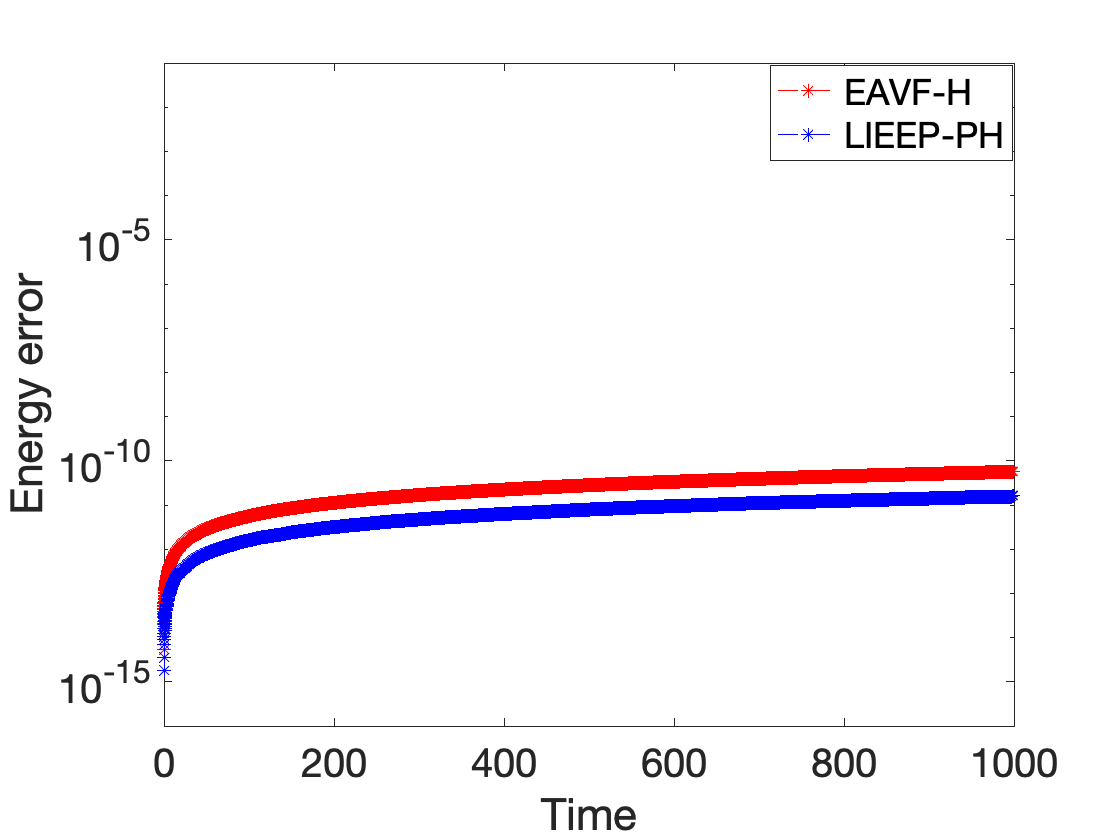}
                \caption{$\theta=\pi/2$}
      \label{energy upwind-conserve}
        \end{subfigure}\hspace{18pt}
          \begin{subfigure}[b]{0.45\textwidth}
        \centering
                \includegraphics[width=0.99\textwidth]{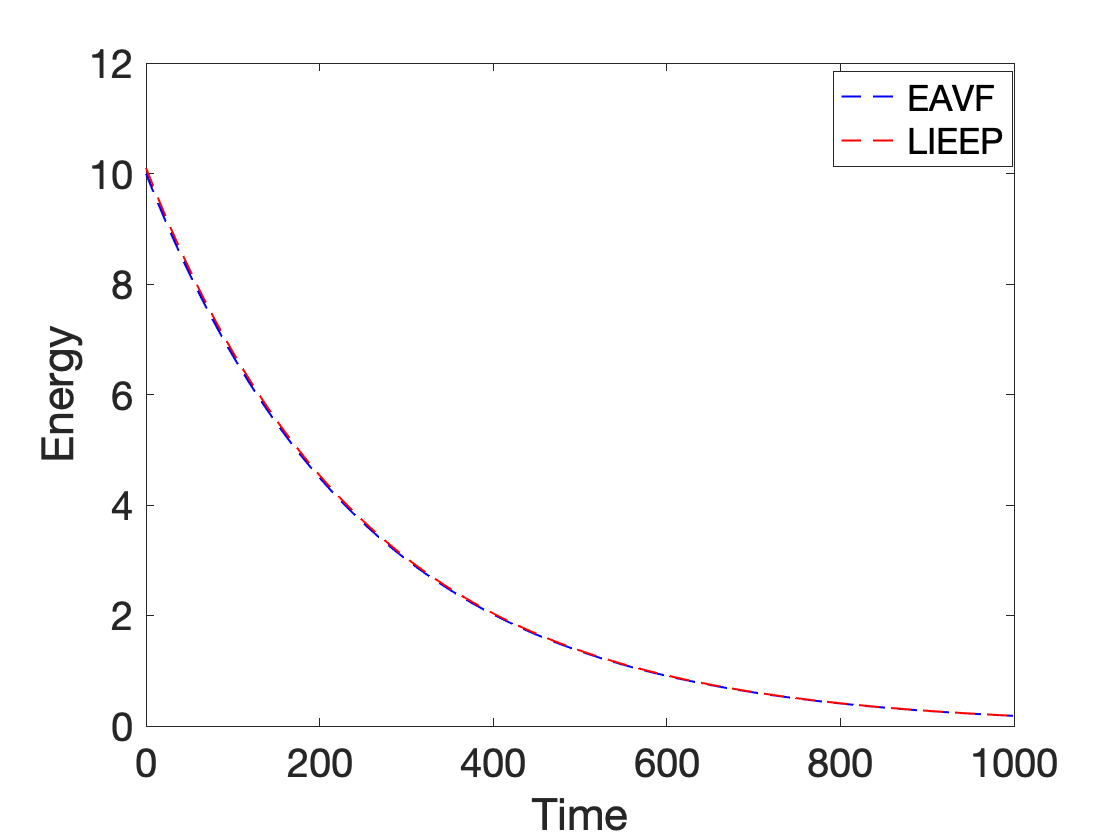} \caption{$\theta=\pi/2-10^{-4}$}
      \label{energy upwind-dissipative}
        \end{subfigure}
      \caption{ The energy behaviour of both  EAVF and  LIEEP method with time step size $h=1/20$. }
      \label{energy upwind}
\end{figure}

In Figure \ref{energy upwind cons efficiency order conserv} and  \ref{energy upwind cons efficiency order dissip}, we consider the global errors and the computational cost using step sizes  $h=1/10/2^{i}$, with $i=0,1,\cdots, 5$. Surprisingly, Figure  \ref{orderplot upwind-conserve} shows that LIEEP method is superconvergent for the conservative system ($\theta=\pi/2$). We find that this behavior is closely related to  the parameter $a$ in the polarized potential energy in \eqref{polarisation of U}. We have tried $a=0, 1/2,1/4, 1$, but only $a=1/2$ gives a three-order behaviour. Figure  \ref{efficiencyplot upwind-conserve} shows that the proposed method is more efficient than the fully implicit EAVF method.  When $\theta=\pi/2-10^{-4}$, i.e., the system is dissipative,  the superconvergent behavior disappears for LIEEP method, see  Figure \ref{orderplot upwind-dissipative}. From this figure, we observe that LIEEP method has a convergent issue when the  step size is $h=1/10$, but with the decrease of the time step size, LIEEP method gets convergent and behaves even better than  EAVF method.  Figure \ref{efficiencyplot upwind-dissipative} indicates that the proposed method is much more efficient than EAVF method for the dissipative system.

\begin{figure}[H]
\centering
      \begin{subfigure}[b]{0.45\textwidth}
      \centering
                \includegraphics[width=0.99\textwidth]{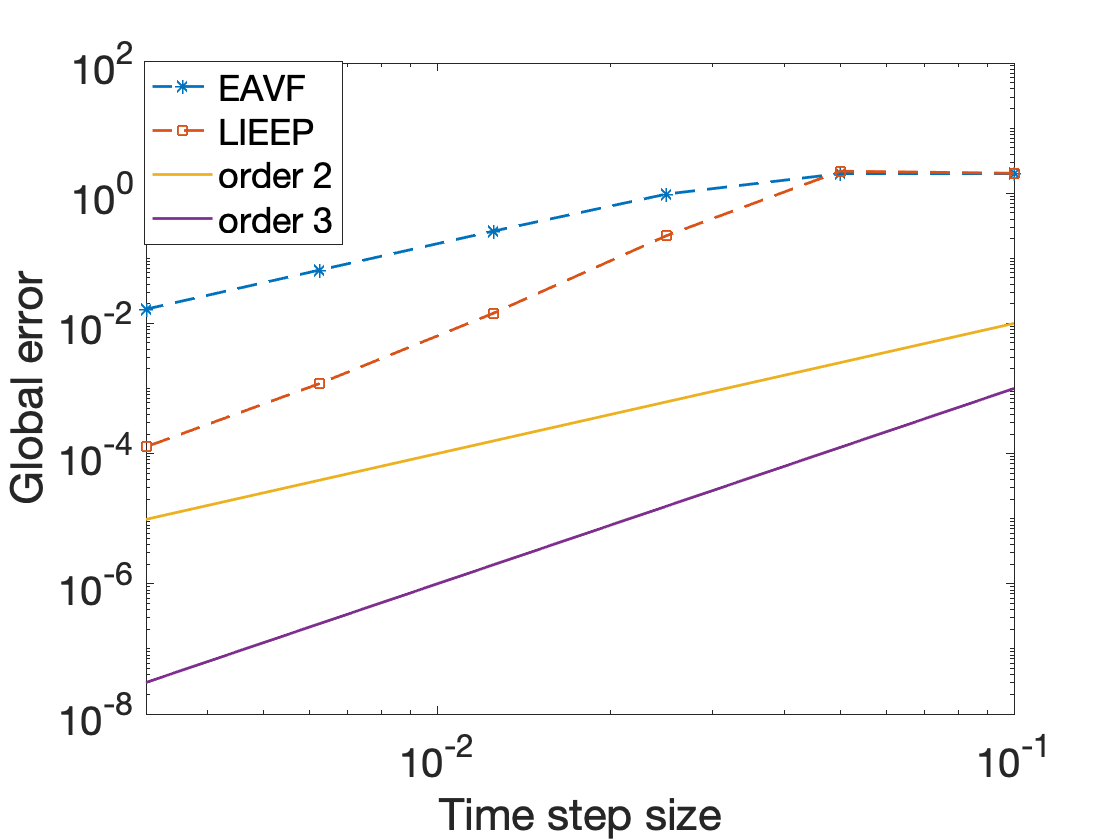}
                 \caption{order plot}
      \label{orderplot upwind-conserve}
        \end{subfigure}\hspace{18pt}
          \begin{subfigure}[b]{0.45\textwidth}
        \centering
                \includegraphics[width=0.99\textwidth]{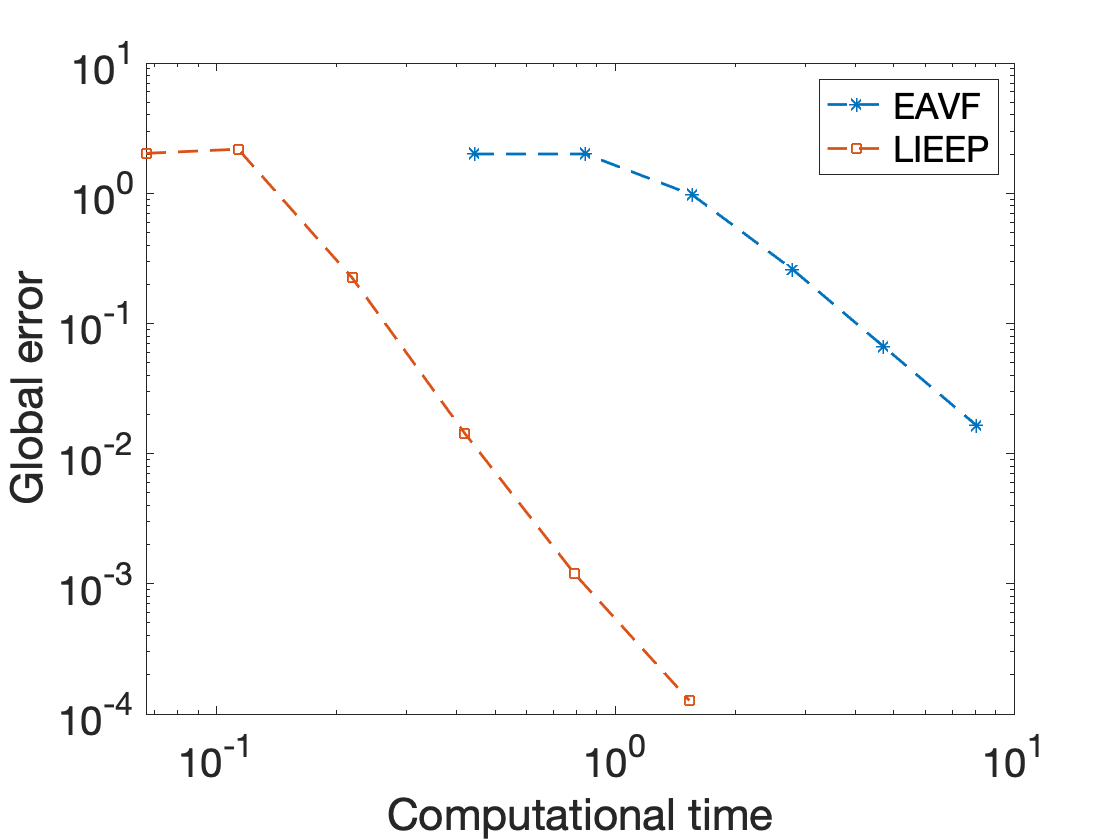}
                  \caption{efficiency plot}
      \label{efficiencyplot upwind-conserve}
        \end{subfigure}
      \caption{$T=1000$, $\theta=\pi/2$, the time step sizes are  $h=1/10/2^{i}$, for $i=0,1,\cdots, 5$.}
      \label{energy upwind cons efficiency order conserv}
\end{figure}

\begin{figure}[H]
\centering
      \begin{subfigure}[b]{0.45\textwidth}
      \centering
                \includegraphics[width=0.99\textwidth]{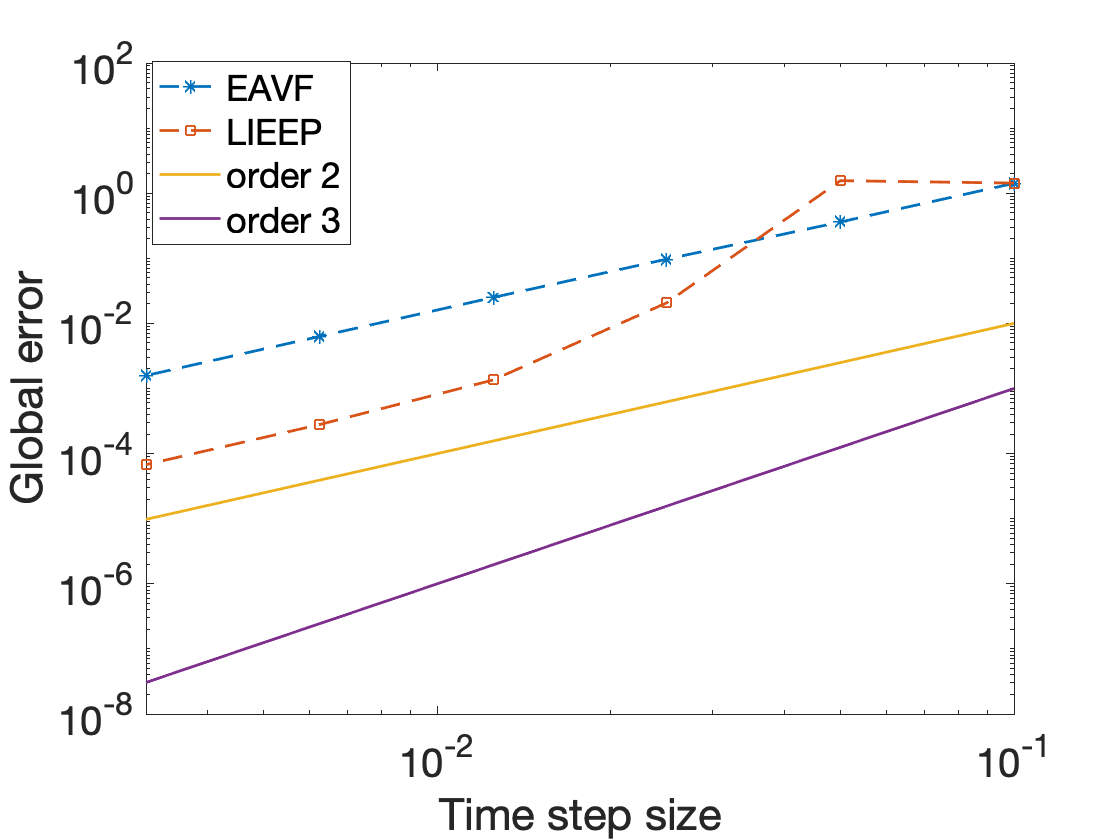}
                  \caption{order plot}
      \label{orderplot upwind-dissipative}
        \end{subfigure}\hspace{18pt}
          \begin{subfigure}[b]{0.45\textwidth}
        \centering
                \includegraphics[width=0.99\textwidth]{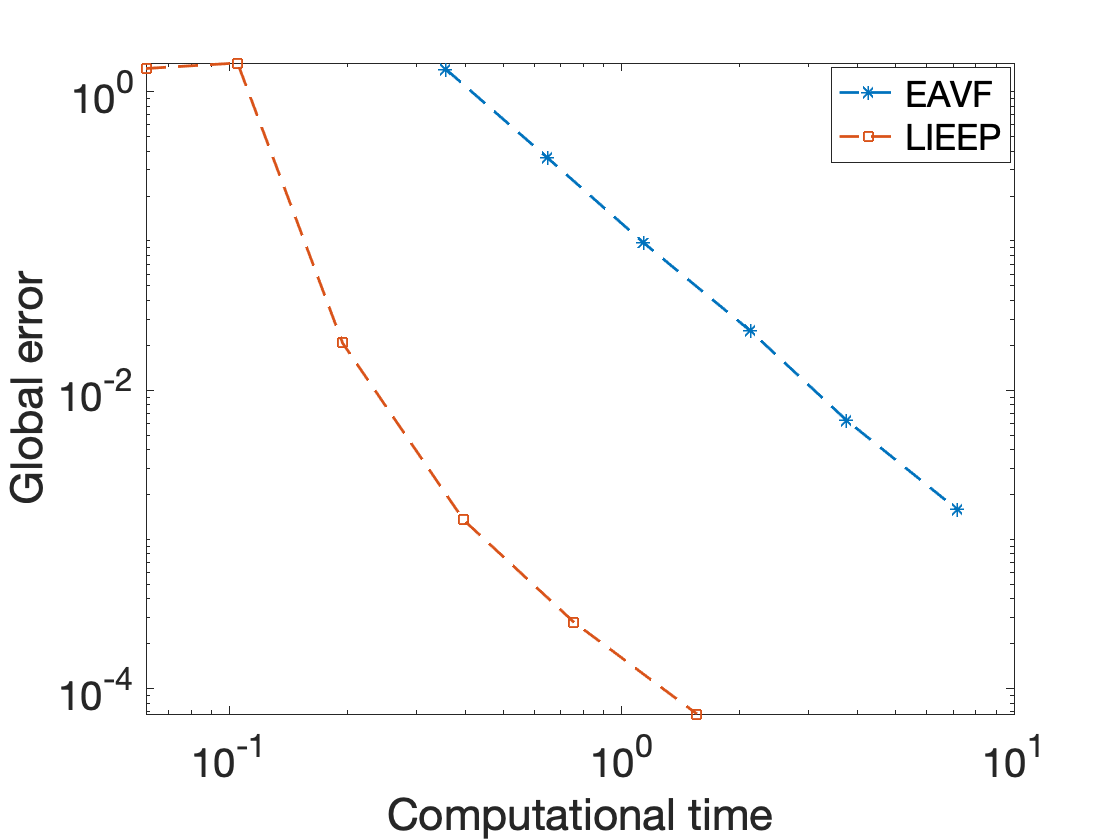}
                  \caption{efficiency plot}
      \label{efficiencyplot upwind-dissipative}
        \end{subfigure}
      \caption{$T=1000$, $\theta=\pi/2-10^{-4}$, the time step sizes are  $h=1/10/2^{i}$, for $i=0,1,\cdots, 5$.}
      \label{energy upwind cons efficiency order dissip}
\end{figure}

\noindent\textbf{Test problem two.}
 We consider  a continuous generalization of an $\alpha$-FPU (Fermi-Pasta-Ulam) system \cite{macias2009implicit}:
\begin{equation}\label{problem2}
%\begin{split}
\frac{\partial^2 u}{\partial t^2}=\beta\frac{\partial^3 u}{\partial t\partial x^2}+\frac{\partial^2 u}{\partial x^2}(1+\epsilon(\frac{\partial u}{\partial x})^p)-\gamma\frac{\partial u}{\partial t}-m^2u,
%\end{split}
\end{equation} 
where $\epsilon>0$, $\beta\ge 0$ is the coefficient of the internal damping,  $ \gamma\ge 0$ is the coefficient of the external damping, and $(x,t)\in[0, L]\times[0, T]$. Taking $\frac{\partial u}{\partial t} =v$, equation \eqref{problem2} can be rewritten as 
\begin{equation}\label{problem2-rewritten}
\begin{split}
\partial_t u&=v\\
\partial_t v&=\beta\frac{\partial^2 v}{\partial x^2}+\frac{\partial^2 u}{\partial x^2}(1+\epsilon(\frac{\partial u}{\partial x})^p)-\gamma v-m^2u.
\end{split}
\end{equation} 
Denoting by $y=[u,v]^T$, equation \eqref{problem2-rewritten} can be reformulated as the following Hamiltonian form
\begin{equation*}\label{problem1-rewritten-Hamiltonian pde}
%\begin{split}
\frac{\partial y}{\partial t}=\mathcal{Q}\frac{\delta \mathcal{H}}{\delta y},
%\end{split}
\end{equation*} 
where 
\begin{equation*}\label{Hamiltonian}
%\begin{split}
\mathcal{Q}=\begin{bmatrix}
    0& 1 \\
    -1 & \beta\partial^2_x-\gamma
  \end{bmatrix},\quad
%  \\
\mathcal{H}=\int_0^L E(t, u, v, u_x)dx,
\end{equation*} 
with 
\begin{equation*}\label{local energy}
E(t, u, v, u_x)=\frac{1}{2}u_x^2+\frac{m^2}{2}u^2+\frac{1}{2}v^2+\epsilon\frac{u_x^{p+2}}{(p+2)(p+1)}.
%\end{split}
\end{equation*} 
The function $E(t, u, v, u_x)$ physically represents the local energy density of  system \eqref{problem2} at any time $t$. 

Consider $p=1$ and the homogeneous Dirichlet boundary conditions $u(0,t)=u(L,t)=0$. Discretizing $\partial_x^2$ with the central difference operator and $\partial_x$ with the forward difference operator,   we obtain the following semi-discrete  ODE system

\begin{equation*}\label{problem1 semi-ODE}
\dot{y}=Q(My+\nabla U(y)),
\end{equation*} 
where
\begin{equation*}\label{problem1 J}
%\begin{split}
Q=\begin{bmatrix}
    0      & I  \\
    -I      & \beta D- \gamma I
\end{bmatrix},
\quad
M=\begin{bmatrix}
    m^2I-D      & 0   \\
    0      & I
\end{bmatrix},
\quad
U(y)=\sum_{j=0}^{N-1}\frac{\epsilon}{6}(\frac{u_{j+1}-u_j}{\Delta x})^3.
%\end{split}
\end{equation*} 
Setting $w_j^n=\frac{u_{j+1}^n-u_j^n}{\Delta x}$,
%$w^n=[w_1^n, w_2^n,\dots,w_{N-1}^n]^T$ 
and defining the polarized energy
\begin{equation*}\label{problem PFU polarised energy}
\bar{U}(w_j^n, w_j^{n+1})=\sum_{j=0}^{N-1}\frac{\epsilon}{6}w_j^n\frac{w_j^n+w_j^{n+1}}{2}w_j^{n+1},
%\end{split}
\end{equation*} 
we can obtain the polarized discrete gradient
%{\small
\begin{equation*}\label{PFU polarised discrete gradient}
\begin{split}
\bar{\nabla}\bar{U}(w_j^n, w_j^{n+1}, w_j^{n+2} )
&=\frac{\epsilon}{6\Delta x}w_{j-1}^{n+1}
(w_{j-1}^{n}+w_{j-1}^{n+1}+w_{j-1}^{n+2})\\
&-\frac{\epsilon}{6\Delta x}w_{j}^{n+1}(w_{j}^{n}+w_{j}^{n+1}+w_{j}^{n+2}),
\end{split}
\end{equation*}
%}
and the discrete gradient
\begin{equation*}\label{PFU  discrete gradient}
\bar{\nabla}U(w_j^n )
=\frac{\epsilon}{2\Delta x}(w_{j-1}^{n})^2-
\frac{\epsilon}{2\Delta x}(w_{j}^{n})^2.
%\end{split}
\end{equation*} 

%%%Based on the above equations \eqref{PFU polarised discrete gradient} and \eqref{problem1 semi-ODE}, we obtain the linearly implicit energy-preserving scheme in the following
%%%\begin{equation}\label{scheme-problem PFU polarised energy}
%%%y_{n+2}=\text{exp}(2hQM)y_n+2h\phi(2hQM)Q\bar{\nabla}\bar{U}(y^n,y^{n+1}, y^{n+2}).
%%%%\end{split}
%%%\end{equation} 
We consider $m=0$, $\epsilon=\frac{3}{4}$,  $L=128, \quad T=100$ and spatial step size $\Delta x=1$.  The initial conditions are set to be $u_j(0)=q_j(0)$, $v_j(0)=\dot{q}_j(0)$ and 
{\small
\begin{equation*}\label{PFU  initial condition}
q_j(t)=
%=1+\text{exp}\(2(\alpha(t-97)+t\text{sinh}(\alpha))\)
5\text{ln}\frac{1+\text{exp}\big(2(\alpha(j-97)+t\text{sinh}(\alpha))\big)}{1+\text{exp}\big(2(\alpha(j-96)+t\text{sinh}(\alpha))\big)}+5\text{ln}\frac{1+\text{exp}\big(2(\alpha(j-32)+t\text{sinh}(\alpha))\big)}{1+\text{exp}\big(2(\alpha(j-33)+t\text{sinh}(\alpha))\big)},
%\end{split}
\end{equation*} 
}
where $\alpha=0.1$.
For LIEEP method, the starting point $y_1$ is computed by the 6-order CRK method.

In Figure \ref{FPU  energy betas gamma0}, we fix the external damping coefficient to be zero ($\gamma=0$) and present the energy behavior of LIEEP method for systems with different internal damping coefficients and a long simulation time $T=500$.  We observe that the numerical method preserves the energy when there is no damping ($\beta=0$) and also preserves the dissipation property when the internal damping coefficient is greater than zero, consistent with what is observed in \cite{macias2009implicit},  where a fully implicit four-step method is considered. Similar behavior is observed in  Figure \ref{FPU  energy beta0 gammas}, where the internal damping coefficient is set to be zero ($\beta=0$). Figure  \ref{FPU beta 0 order efficiency} and \ref{FPU beta 2 order efficiency} confirm that both EAVF  and  LIEEP method are of order 2 in time, and the comparison of the computational cost between these two methods gives a clear evidence that the proposed method is more efficient than  EAVF method. In this experiment, we also present the numerical solutions given by LIEEP method for systems with different settings of $\beta$ and $\gamma$, see Figure  \ref{FPU solution}. These figures clearly demonstrate the dissipative nature of the
external damping coefficient, see the change of the colors between Figure  \ref{FPU gamma 0 solution} and  Figure \ref{FPU gamma 0005 solution}, and the internal damping coefficient, see the change of the shapes between Figure \ref{FPU gamma 0 solution} and Figure \ref{FPU beta2 solution}. These observations in the numerical solutions are in accordance with the results shown by the fully implicit four-step method in \cite{macias2009implicit}.

\begin{figure}[H]
\centering
      \begin{subfigure}[b]{0.45\textwidth}
      \centering
                \includegraphics[width=0.99\textwidth]{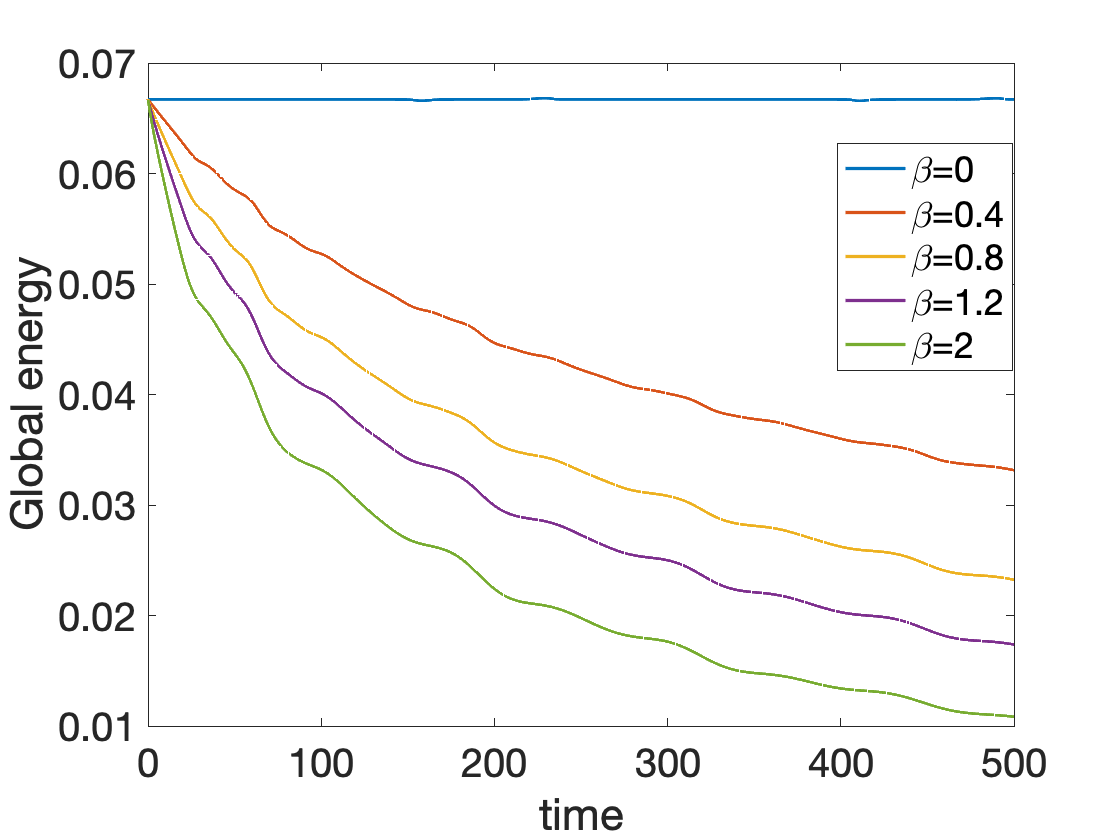}
                \caption{$\gamma=0$}
      \label{FPU  energy betas gamma0}
        \end{subfigure}\hspace{18pt}
          \begin{subfigure}[b]{0.45\textwidth}
        \centering
                \includegraphics[width=0.99\textwidth]{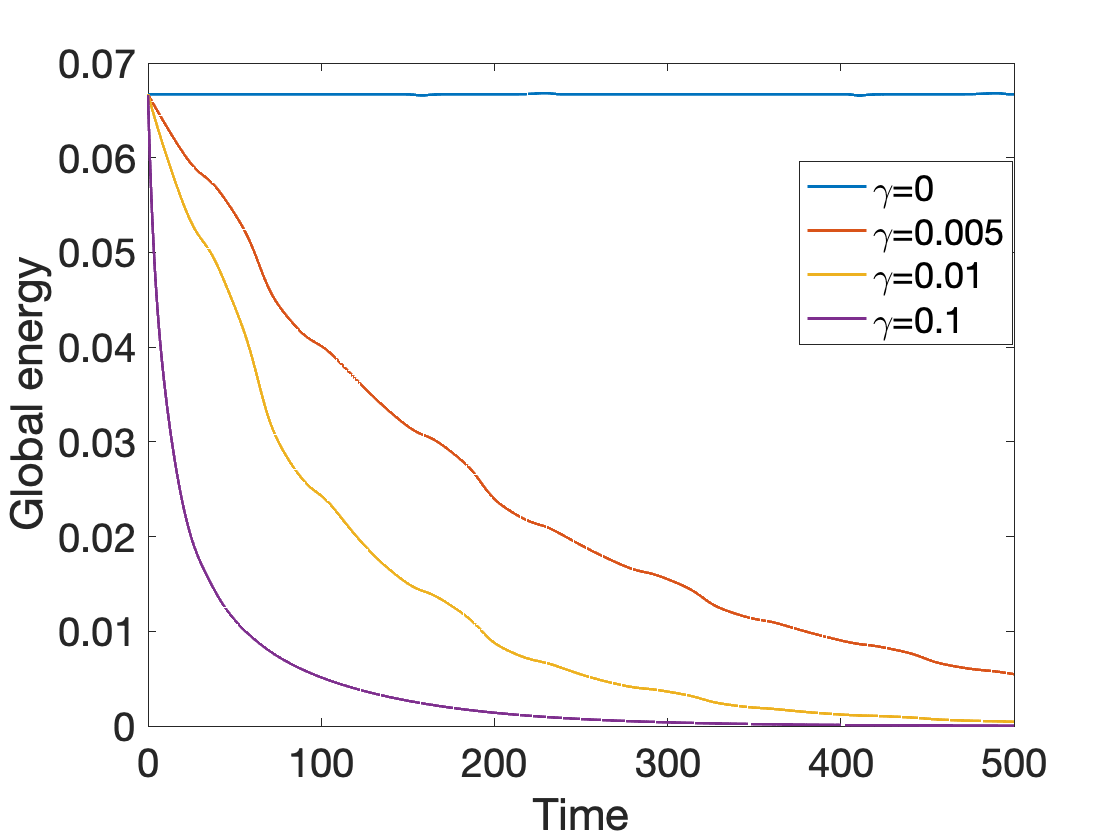}
                \caption{$\beta=0$. }
      \label{FPU  energy beta0 gammas}
        \end{subfigure}
      \caption{Polarized energy by LIEEP method for $\alpha$-FPU system with different settings of internal and external damping coefficients. $T=500$ and time step size $h=0.025$.}
      \label{FPU  energy beta gamma}
\end{figure}

\begin{figure}[H]
\centering
      \begin{subfigure}[b]{0.45\textwidth}
      \centering
                \includegraphics[width=0.99\textwidth]{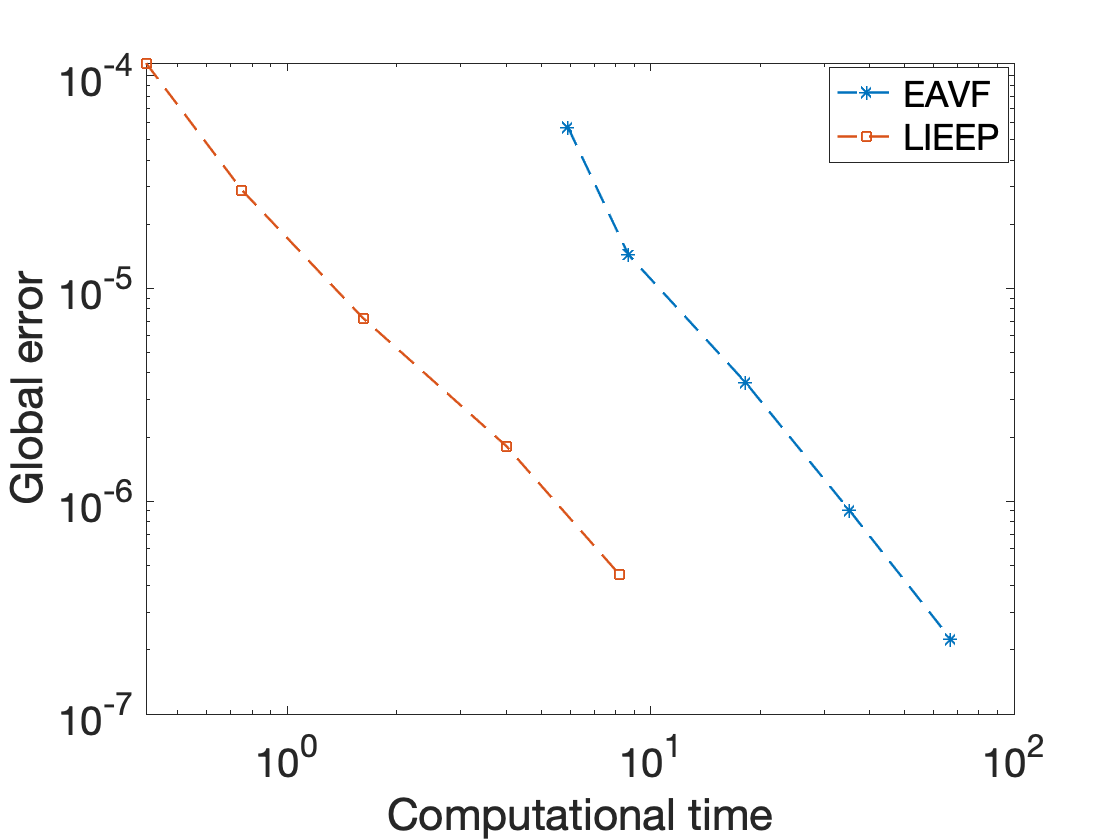}
        \end{subfigure}\hspace{18pt}
          \begin{subfigure}[b]{0.45\textwidth}
        \centering
                \includegraphics[width=0.99\textwidth]{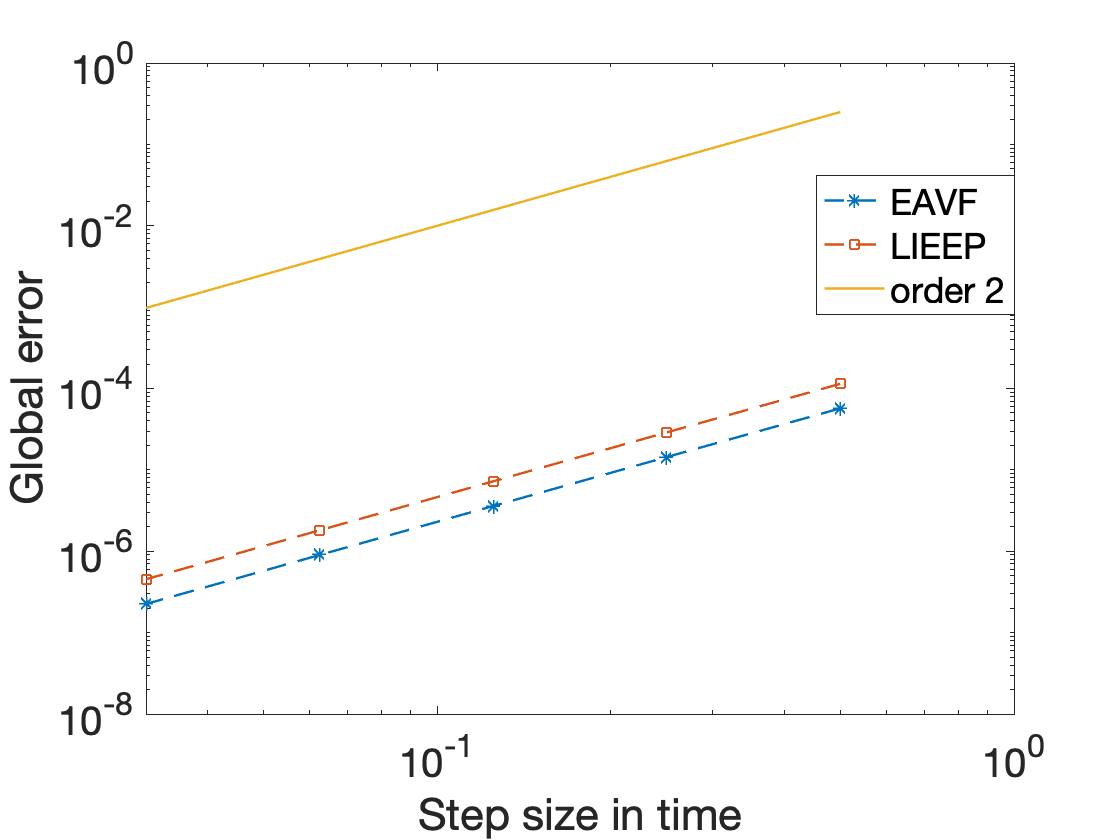}
        \end{subfigure}
      \caption{ $T=100$, $\gamma=0.005$, $\beta=0$ and time step size $h=\frac{1}{2^i}$, $i=1,\cdots,5$, \textit{Left:} efficiency comparison; \textit{right:} order plot. }
      \label{FPU beta 0 order efficiency}
\end{figure}

\begin{figure}[H]
\centering
      \begin{subfigure}[b]{0.45\textwidth}
      \centering
                \includegraphics[width=0.99\textwidth]{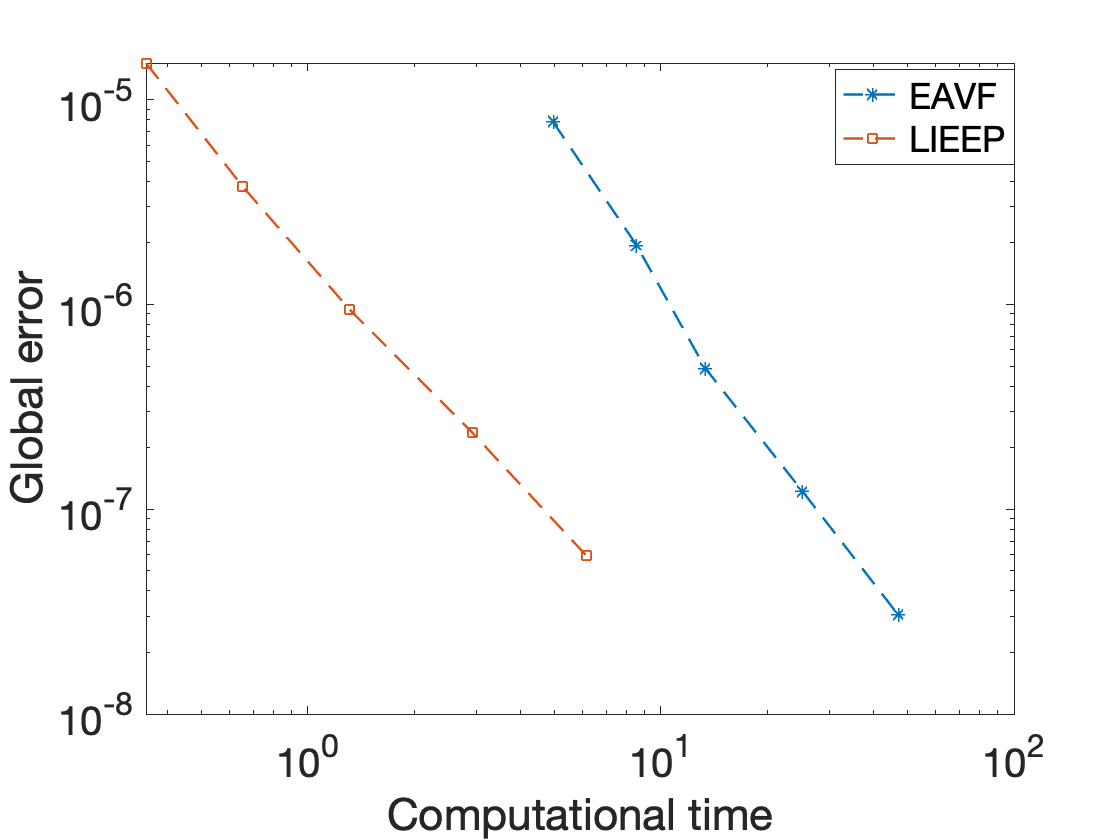}
        \end{subfigure}\hspace{18pt}
          \begin{subfigure}[b]{0.45\textwidth}
        \centering
                \includegraphics[width=0.99\textwidth]{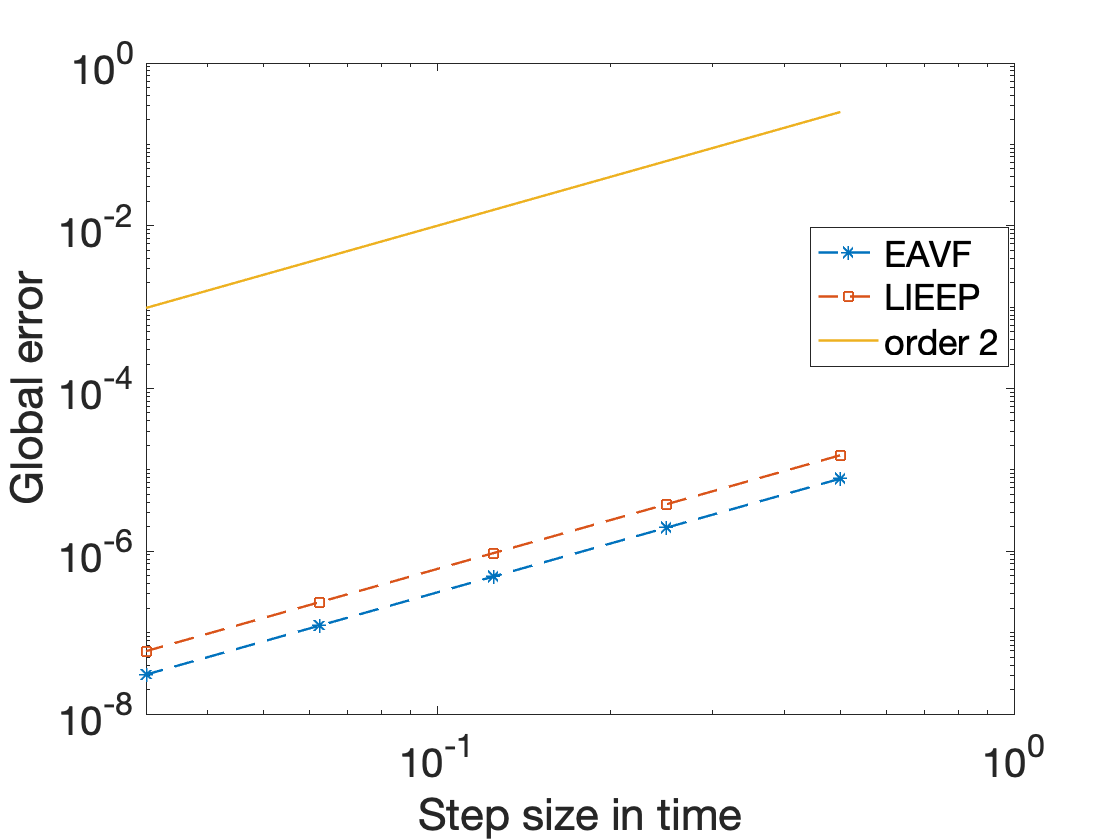}
        \end{subfigure}
      \caption{$T=100$, $\gamma=0$, $\beta=2$ and time step size $h=\frac{1}{2^i}$, $i=1,\cdots,5$. \textit{Left:} efficiency comparison; \textit{right:} order plot. }
      \label{FPU beta 2 order efficiency}
\end{figure}

\begin{figure}[H]
\centering
      \begin{subfigure}[b]{0.45\textwidth}
      \centering
                \includegraphics[width=0.99\textwidth]{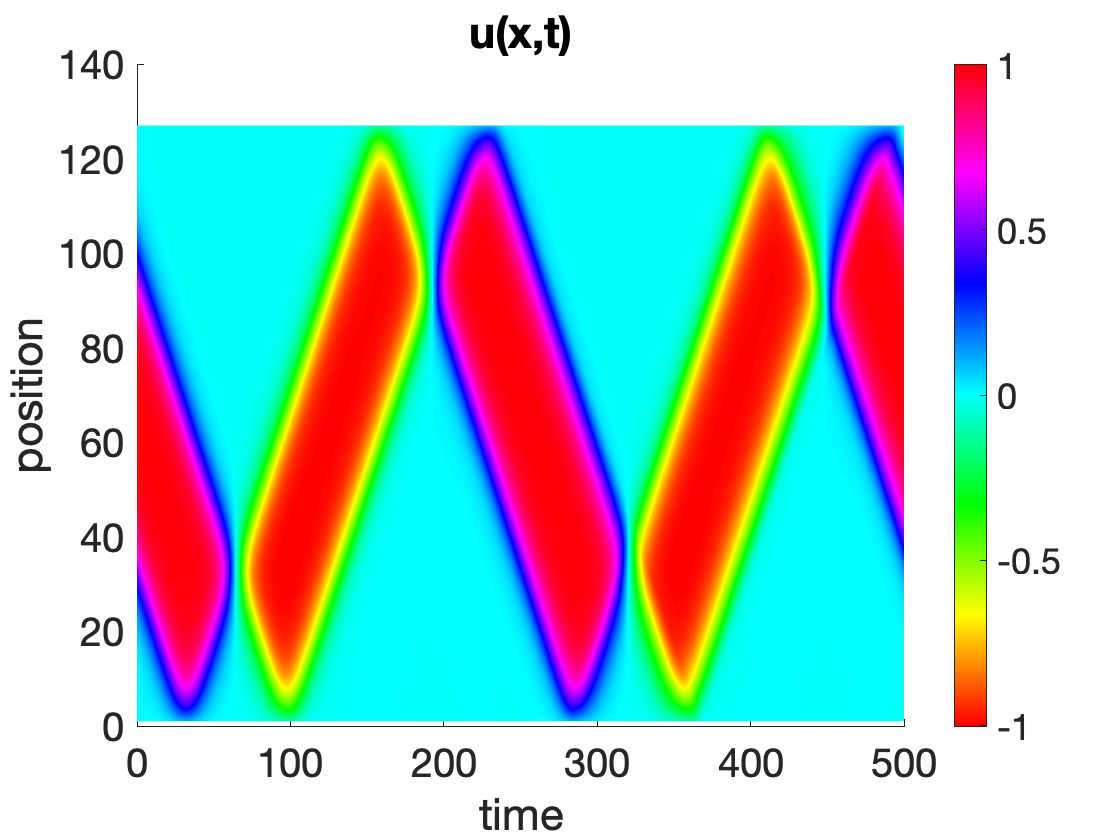}
                \caption{$\gamma=0$, $\beta=0$.}
      \label{FPU gamma 0 solution}
        \end{subfigure}
        
          \begin{subfigure}[b]{0.45\textwidth}
        \centering
                \includegraphics[width=0.99\textwidth]{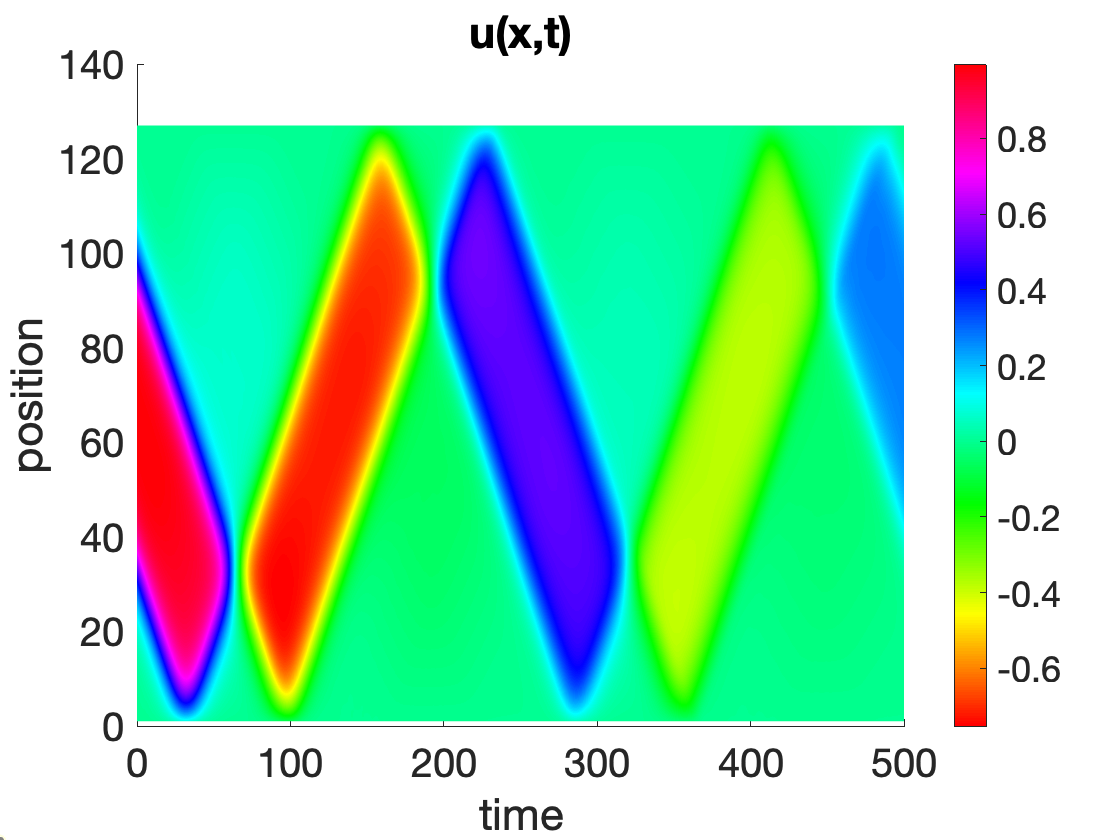}
                  \caption{$\gamma=0.005$, $\beta=0$.}
      \label{FPU gamma 0005 solution}
        \end{subfigure}
          \begin{subfigure}[b]{0.45\textwidth}
        \centering
                \includegraphics[width=0.99\textwidth]{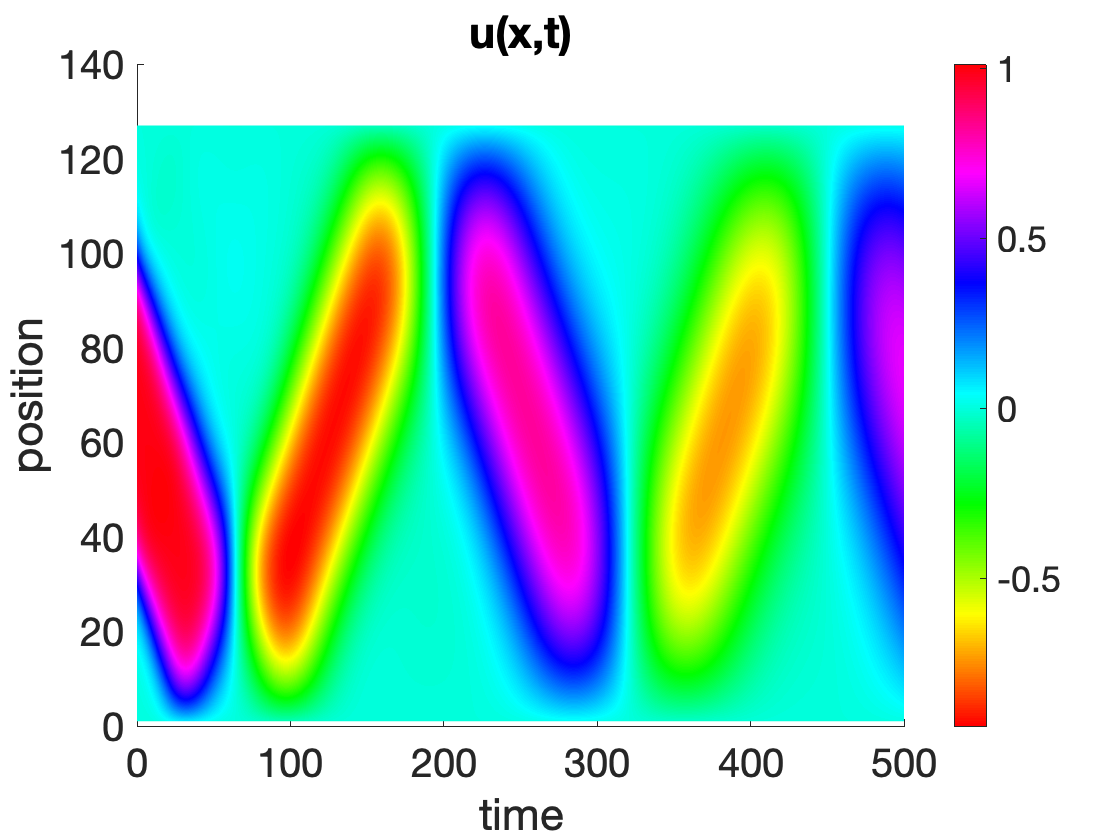}
                 \caption{$\gamma=0$, $\beta=2$.}
      \label{FPU beta2 solution}
        \end{subfigure}
      \caption{ The numerical solution of the  $\alpha$-FPU system with different settings of internal and external damping coefficients on time interval $[0, 500]$ and with time step size $h=0.025$.   }
      \label{FPU solution}
\end{figure}

\vspace{2pt}

\noindent\textbf{Test problem three.}
We consider the polynomial pendulum oscillator, and the main focus of this example is to illustrate the energy conservation property shown in Corollary \ref{orerp-energy} for the proposed method in Remark \ref{orderp-scheme}. Consider the nonlinear pendulum problem with the Hamiltonian 
\begin{equation*}\label{pendulum energy}
H(p,q)=\frac{1}{2}p^2+1-\text{cos}q,
\end{equation*} 
and a truncated Taylor expansion of the cosine function:
\begin{equation}\label{pendulum Taylor}
H(p,q)=\frac{1}{2}p^2+\frac{1}{2}q^2-\frac{1}{24}q^4+\frac{1}{720}q^6.
\end{equation} 
The approximation in \eqref{pendulum Taylor} to the original problem will be more accurate if even higher-order polynomial is used and   $\lvert q\rvert$ is sufficiently small, e.g., $\lvert q\rvert<\frac{1}{2}$ \cite{iavernaro2009high}. 
Denoting by $y=[q;p]$, the polynomial pendulum oscillator with energy function \eqref{pendulum Taylor} can be rewritten into form \eqref{Hamilonian stiff equation} with $m=2$, $J$ the canonical skew-symmetric matrix, M the identity matrix and 
\begin{equation}\label{pendulum poly U}
U(y)=-\frac{1}{24}q^4+\frac{1}{720}q^6.
\end{equation} 
Consider a polarization of \eqref{pendulum poly U} as follows
\begin{equation}\label{polaried pendulum poly U}
\bar{U}(y_n,y_{n+1},y_{n+2})=-\frac{1}{24}q_nq_{n+1}q_{n+2}\frac{q_n+q_{n+1}+q_{n+2}}{3}+\frac{1}{720}{q_n}^2{q_{n+1}}^2{q_{n+2}}^2.
\end{equation} 
We can obtain a polarized discrete gradient of the form
\begin{equation*}\label{polarised gradient U}
\begin{split}
\bar{\nabla}\bar{U}(y_n,y_{n+1},y_{n+2},y_{n+3} )=&\frac{1}{240}{q_{n+1}}^2{q_{n+2}}^2(q_{n}+q_{n+3})\\
&-\frac{1}{24}q_{n+1}q_{n+2}(q_n+q_{n+1}+q_{n+2}+q_{n+3}).
\end{split}
\end{equation*} 
Take the initial value as  $q_0=0.5$, $p_0=1$ and the integration interval as $[0,1000]$. We compute the first two starting points $y_1$ and $y_2$ by  Matlab function \emph{ode15s}. The polarized energy is reported in Figure \ref{pendulum energy fig}, and we observe that it is exactly preserved by LIEEP method defined by equation \eqref{approx EP exponential integrator_higherorder}. In this figure, we also present the original discrete energy by LIEEP method , i.e.,
\begin{equation}\label{discrete pendulum energy}
%H(p_n,q_n)-H(p_0,q_0), \quad \text{where} \quad 
H(p_n,q_n)=\frac{1}{2}p_n^2+1-\text{cos}q_n.
\end{equation} 
Although LIEEP method does not preserve the exact original energy, Figure \ref{pendulum energy fig} shows that the discrete energy given by  LIEEP method in the form of \eqref{discrete pendulum energy}  stays oscillated and bounded over a long-time integration. Besides, we observe  that the numerical solution by LIEEP method applied to the truncated equation provides an approximation with a similar behaviour as the exact solution of the nonlinear pendulum oscillator if  a small time step size is considered, e.g., h=0.3, i.e.,  the phase space is a cylinder,  as illustrated in Figure \ref{pendulum solution}.

\begin{figure}[H]
\centering
      \begin{subfigure}[b]{0.45\textwidth}
      \centering
                \includegraphics[width=0.99\textwidth]{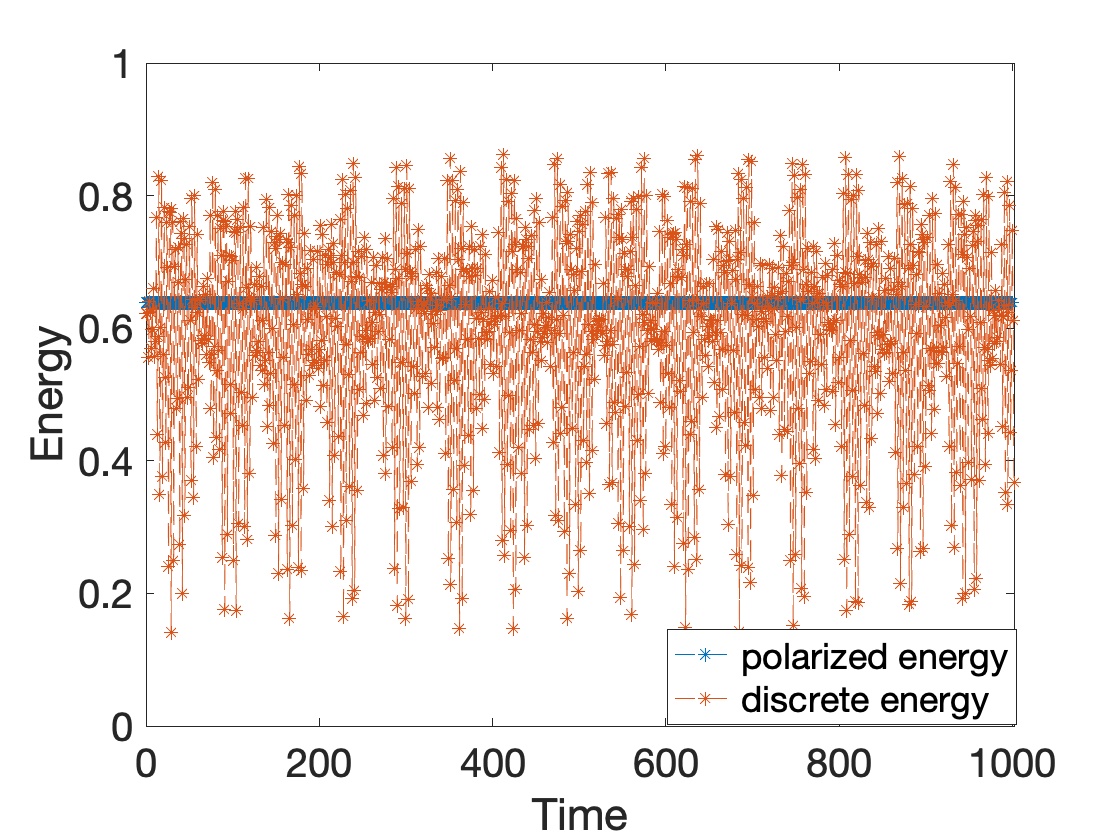}
                 \caption{Energy} \label{pendulum energy fig}
        \end{subfigure}\hspace{18pt}
          \begin{subfigure}[b]{0.45\textwidth}
        \centering
                \includegraphics[width=0.99\textwidth]{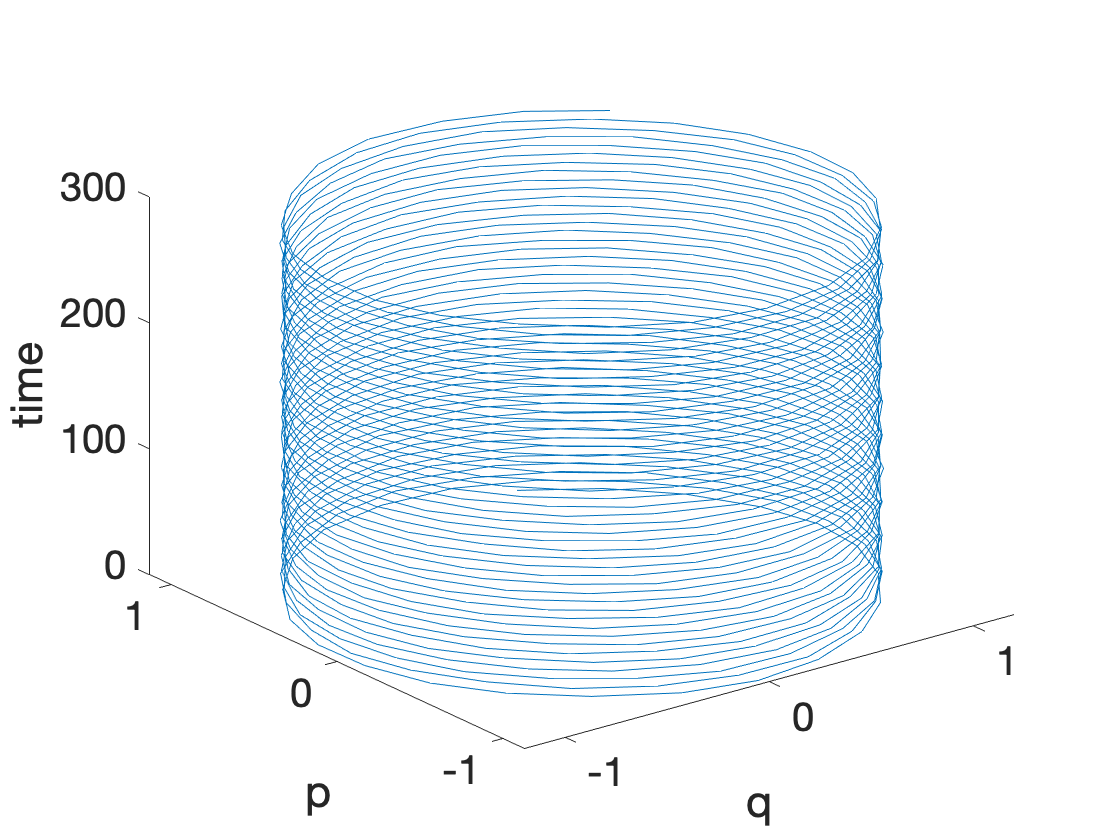}
                 \caption{solution}\label{pendulum solution}
        \end{subfigure}
      \caption{ In \ref{pendulum energy fig}, the time step size is $h=1$, the polarized energy is defined by equation  \eqref{polaEn higherorder} with $p=3$ and $\bar{U}(y_n,y_{n+1},y_{n+2})$ defined by \eqref{polaried pendulum poly U}; the discrete energy is defined by equation \eqref{discrete pendulum energy}. In \ref{pendulum solution}, the time step size is $h=0.3$.}
      \label{pendulum}
\end{figure}

\section{Conclusion}
This paper constructs a novel symmetric linearly implicit  exponential integrator that holds the conservative properties for semi-linear problems  with polynomial energy functions. The method is developed based on combining the idea of using polarized discrete gradient to build linearly implicit methods and the idea of using discrete gradient to create energy-preserving exponential integrators.
%The former idea arises from portioning out the nonlinearity in the gradient using the quadratic polarization for polynomial energy functions, the later??? which arises from the variation of constants formula. 
Besides conservative properties, the method is shown to be symmetric, which guarantees excellent long-time behavior.

%%The method is developed based on combining the idea of polarized discrete gradient, arising from the discrete gradient method and the polarization technique of polynomial energy functions, and the idea of exponential integrators which arises from the variation of constants formula. 
%Besides conservative properties, the method is shown to be symmetric, thus  guaranteeing excellent long-time behavior.

We test our methods on three types of differential equations, including an oscillated ODE, i.e., the averaged wind-induced oscillator, an oscillated PDE, i.e.,  the damped FPU problem, and also an ODE with higher-order polynomial energy fucntion, i.e., the polynomial pendulum oscillator. The numerical experiments confirm that the proposed method preserves the polarized energy or the Lyapunov function, and the method is of order two. Moreover, it has been shown that the proposed method has superconvergent behavior for some particular systems when a proper polarized energy is considered. 
Compared with the fully implicit method (EVAF), our method shows a significantly lower computational cost. In view of the nice properties and the good behavior, we recommend the proposed method for problems with polynomial energy function.

%We recommend the proposed method for problems with polynomial energy function, in view of its computational efficiency, conservation property,  symmetric property,  its simple scheme and  possible superconvergent behavior.

%%% remove drawback We recommed the proposed method,    in view of   

%% The Appendices part is started with the command \appendix;
%% appendix sections are then done as normal sections
%\appendix
\section*{Acknowledgement}
The author would like to thank  Isaac Newton Institute for Mathematical Sciences, Cambridge, for support and hospitality during the programme \emph{Geometry, compatibility and structure preservation in computational differential equations (2019)} under Grant number EP/R014604/1,  where work on this paper was partly carried out.

The author would also like to thank the
European Union Horizon 2020 research and innovation programme under the Marie Sk\l{}odowska-Curie grant agreement  No. 691070 CHiPS.
%The author acknowledges support from the European Union Horizon 2020 research and innovation programmes under the Marie Skodowska-Curie grant agreement No. 691070.
%\section{Sample Appendix Section}
%\label{sec:sample:appendix}

%% If you have bibdatabase file and want bibtex to generate the
%% bibitems, please use
%%
\bibliographystyle{elsarticle-num-names} 
\bibliography{LIEEI}

\begin{thebibliography}{24}
\expandafter\ifx\csname natexlab\endcsname\relax\def\natexlab#1{#1}\fi
\providecommand{\url}[1]{\texttt{#1}}
\providecommand{\href}[2]{#2}
\providecommand{\path}[1]{#1}
\providecommand{\DOIprefix}{doi:}
\providecommand{\ArXivprefix}{arXiv:}
\providecommand{\URLprefix}{URL: }
\providecommand{\Pubmedprefix}{pmid:}
\providecommand{\doi}[1]{\href{http://dx.doi.org/#1}{\path{#1}}}
\providecommand{\Pubmed}[1]{\href{pmid:#1}{\path{#1}}}
\providecommand{\bibinfo}[2]{#2}
\ifx\xfnm\relax \def\xfnm[#1]{\unskip,\space#1}\fi
%Type = Article
\bibitem[{Cary and Brizard(2009)}]{Cary}
\bibinfo{author}{J.~R. Cary}, \bibinfo{author}{A.~J. Brizard},
\newblock \bibinfo{title}{Hamiltonian theory of guiding-center motion},
\newblock \bibinfo{journal}{Rev. Modern Phys.} \bibinfo{volume}{81}
  (\bibinfo{year}{2009}) \bibinfo{pages}{693--738}. \URLprefix
  \url{https://doi.org/10.1103/RevModPhys.81.693}.
  \DOIprefix\doi{10.1103/RevModPhys.81.693}.
%Type = Article
\bibitem[{Cotter and Reich(2006)}]{Cotter}
\bibinfo{author}{C.~J. Cotter}, \bibinfo{author}{S.~Reich},
\newblock \bibinfo{title}{Semigeostrophic particle motion and exponentially
  accurate normal forms},
\newblock \bibinfo{journal}{Multiscale Model. Simul.} \bibinfo{volume}{5}
  (\bibinfo{year}{2006}) \bibinfo{pages}{476--496}. \URLprefix
  \url{https://doi.org/10.1137/05064326X}. \DOIprefix\doi{10.1137/05064326X}.
%Type = Article
\bibitem[{Hochbruck and Ostermann(2010)}]{Hochbruck2010}
\bibinfo{author}{M.~Hochbruck}, \bibinfo{author}{A.~Ostermann},
\newblock \bibinfo{title}{Exponential integrators},
\newblock \bibinfo{journal}{Acta Numer.} \bibinfo{volume}{19}
  (\bibinfo{year}{2010}) \bibinfo{pages}{209--286}. \URLprefix
  \url{https://doi.org/10.1017/S0962492910000048}.
  \DOIprefix\doi{10.1017/S0962492910000048}.
%Type = Article
\bibitem[{Hochbruck et~al.(0809)Hochbruck, Ostermann, and
  Schweitzer}]{Hochbruck2008}
\bibinfo{author}{M.~Hochbruck}, \bibinfo{author}{A.~Ostermann},
  \bibinfo{author}{J.~Schweitzer},
\newblock \bibinfo{title}{Exponential {R}osenbrock-type methods},
\newblock \bibinfo{journal}{SIAM J. Numer. Anal.} \bibinfo{volume}{47}
  (\bibinfo{year}{2008/09}) \bibinfo{pages}{786--803}. \URLprefix
  \url{https://doi.org/10.1137/080717717}. \DOIprefix\doi{10.1137/080717717}.
%Type = Book
\bibitem[{Hairer et~al.(2006)Hairer, Lubich, and Wanner}]{Hairer2006}
\bibinfo{author}{E.~Hairer}, \bibinfo{author}{C.~Lubich},
  \bibinfo{author}{G.~Wanner}, \bibinfo{title}{Geometric numerical
  integration}, volume~\bibinfo{volume}{31} of
  \textit{\bibinfo{series}{Springer Series in Computational Mathematics}},
  \bibinfo{edition}{second} ed., \bibinfo{publisher}{Springer-Verlag, Berlin},
  \bibinfo{year}{2006}. \bibinfo{note}{Structure-preserving algorithms for
  ordinary differential equations}.
%Type = Article
\bibitem[{Celledoni et~al.(2008)Celledoni, Cohen, and Owren}]{Celledoni2008}
\bibinfo{author}{E.~Celledoni}, \bibinfo{author}{D.~Cohen},
  \bibinfo{author}{B.~Owren},
\newblock \bibinfo{title}{Symmetric exponential integrators with an application
  to the cubic {S}chr\"{o}dinger equation},
\newblock \bibinfo{journal}{Found. Comput. Math.} \bibinfo{volume}{8}
  (\bibinfo{year}{2008}) \bibinfo{pages}{303--317}. \URLprefix
  \url{https://doi.org/10.1007/s10208-007-9016-7}.
  \DOIprefix\doi{10.1007/s10208-007-9016-7}.
%Type = Book
\bibitem[{Wu and Wang(2018)}]{Wuxinyuan2018}
\bibinfo{author}{X.~Wu}, \bibinfo{author}{B.~Wang}, \bibinfo{title}{Recent
  developments in structure-preserving algorithms for oscillatory differential
  equations}, \bibinfo{publisher}{Science Press Beijing, Beijing; Springer,
  Singapore}, \bibinfo{year}{2018}. \URLprefix
  \url{https://doi.org/10.1007/978-981-10-9004-2}.
  \DOIprefix\doi{10.1007/978-981-10-9004-2}.
%Type = Article
\bibitem[{Li and Wu(2016)}]{Wuxinyuan2016}
\bibinfo{author}{Y.-W. Li}, \bibinfo{author}{X.~Wu},
\newblock \bibinfo{title}{Exponential integrators preserving first integrals or
  {L}yapunov functions for conservative or dissipative systems},
\newblock \bibinfo{journal}{SIAM J. Sci. Comput.} \bibinfo{volume}{38}
  (\bibinfo{year}{2016}) \bibinfo{pages}{A1876--A1895}. \URLprefix
  \url{https://doi.org/10.1137/15M1023257}. \DOIprefix\doi{10.1137/15M1023257}.
%Type = Article
\bibitem[{Miyatake(2014)}]{miyatake2014energy}
\bibinfo{author}{Y.~Miyatake},
\newblock \bibinfo{title}{An energy-preserving exponentially-fitted continuous
  stage {R}unge--{K}utta method for {H}amiltonian systems},
\newblock \bibinfo{journal}{BIT Numerical Mathematics} \bibinfo{volume}{54}
  (\bibinfo{year}{2014}) \bibinfo{pages}{777--799}.
%Type = Article
\bibitem[{Cui et~al.(2021)Cui, Xu, Wang, and Jiang}]{cui2021mass}
\bibinfo{author}{J.~Cui}, \bibinfo{author}{Z.~Xu}, \bibinfo{author}{Y.~Wang},
  \bibinfo{author}{C.~Jiang},
\newblock \bibinfo{title}{Mass-and energy-preserving exponential
  {R}unge--{K}utta methods for the nonlinear {S}chr{\"o}dinger equation},
\newblock \bibinfo{journal}{Applied Mathematics Letters} \bibinfo{volume}{112}
  (\bibinfo{year}{2021}) \bibinfo{pages}{106770}.
%Type = Article
\bibitem[{Shen and Leok(2019)}]{shen2019geometric}
\bibinfo{author}{X.~Shen}, \bibinfo{author}{M.~Leok},
\newblock \bibinfo{title}{Geometric exponential integrators},
\newblock \bibinfo{journal}{Journal of Computational Physics}
  \bibinfo{volume}{382} (\bibinfo{year}{2019}) \bibinfo{pages}{27--42}.
%Type = Article
\bibitem[{Jiang et~al.(2020)Jiang, Wang, and Cai}]{jiang2020linearly}
\bibinfo{author}{C.~Jiang}, \bibinfo{author}{Y.~Wang},
  \bibinfo{author}{W.~Cai},
\newblock \bibinfo{title}{A linearly implicit energy-preserving exponential
  integrator for the nonlinear {K}lein-{G}ordon equation},
\newblock \bibinfo{journal}{Journal of Computational Physics}
  \bibinfo{volume}{419} (\bibinfo{year}{2020}) \bibinfo{pages}{109690}.
%Type = Article
\bibitem[{Shen et~al.(2019)Shen, Xu, and Yang}]{shen2019new}
\bibinfo{author}{J.~Shen}, \bibinfo{author}{J.~Xu}, \bibinfo{author}{J.~Yang},
\newblock \bibinfo{title}{A new class of efficient and robust energy stable
  schemes for gradient flows},
\newblock \bibinfo{journal}{SIAM Review} \bibinfo{volume}{61}
  (\bibinfo{year}{2019}) \bibinfo{pages}{474--506}.
%Type = Book
\bibitem[{Furihata and Matsuo(2011)}]{furihata2011discrete}
\bibinfo{author}{D.~Furihata}, \bibinfo{author}{T.~Matsuo},
  \bibinfo{title}{Discrete variational derivative method}, Chapman \& Hall/CRC
  Numerical Analysis and Scientific Computing, \bibinfo{publisher}{CRC Press,
  Boca Raton, FL}, \bibinfo{year}{2011}. \bibinfo{note}{A structure-preserving
  numerical method for partial differential equations}.
%Type = Article
\bibitem[{Dahlby and Owren(2011)}]{dahlby2011general}
\bibinfo{author}{M.~Dahlby}, \bibinfo{author}{B.~Owren},
\newblock \bibinfo{title}{A general framework for deriving integral preserving
  numerical methods for {PDE}s},
\newblock \bibinfo{journal}{SIAM J. Sci. Comput.} \bibinfo{volume}{33}
  (\bibinfo{year}{2011}) \bibinfo{pages}{2318--2340}. \URLprefix
  \url{https://doi.org/10.1137/100810174}. \DOIprefix\doi{10.1137/100810174}.
%Type = Article
\bibitem[{Eidnes and Li(2020)}]{eidnes2020linearly}
\bibinfo{author}{S.~Eidnes}, \bibinfo{author}{L.~Li},
\newblock \bibinfo{title}{Linearly implicit local and global energy-preserving
  methods for {PDE}s with a cubic {H}amiltonian},
\newblock \bibinfo{journal}{SIAM Journal on Scientific Computing}
  \bibinfo{volume}{42} (\bibinfo{year}{2020}) \bibinfo{pages}{A2865--A2888}.
%Type = Article
\bibitem[{Zhao et~al.(2017)Zhao, Wang, and Yang}]{zhao2017numerical}
\bibinfo{author}{J.~Zhao}, \bibinfo{author}{Q.~Wang},
  \bibinfo{author}{X.~Yang},
\newblock \bibinfo{title}{Numerical approximations for a phase field dendritic
  crystal growth model based on the invariant energy quadratization approach},
\newblock \bibinfo{journal}{International Journal for Numerical Methods in
  Engineering} \bibinfo{volume}{110} (\bibinfo{year}{2017})
  \bibinfo{pages}{279--300}.
%Type = Book
\bibitem[{Hairer et~al.(2006)Hairer, Lubich, and Wanner}]{hairer2006geometric}
\bibinfo{author}{E.~Hairer}, \bibinfo{author}{C.~Lubich},
  \bibinfo{author}{G.~Wanner}, \bibinfo{title}{Geometric numerical integration:
  structure-preserving algorithms for ordinary differential equations},
  volume~\bibinfo{volume}{31}, \bibinfo{publisher}{Springer Science \& Business
  Media}, \bibinfo{year}{2006}.
%Type = Article
\bibitem[{Eidnes et~al.(2019)Eidnes, Li, and Sato}]{eidnes2019linearly}
\bibinfo{author}{S.~Eidnes}, \bibinfo{author}{L.~Li},
  \bibinfo{author}{S.~Sato},
\newblock \bibinfo{title}{Linearly implicit structure-preserving schemes for
  {H}amiltonian systems},
\newblock \bibinfo{journal}{Journal of Computational and Applied Mathematics}
  (\bibinfo{year}{2019}) \bibinfo{pages}{112489}.
%Type = Article
\bibitem[{Berland et~al.(2007)Berland, Skaflestad, and
  Wright}]{berland2007expint}
\bibinfo{author}{H.~Berland}, \bibinfo{author}{B.~Skaflestad},
  \bibinfo{author}{W.~M. Wright},
\newblock \bibinfo{title}{Expint---a {M}atlab package for exponential
  integrators},
\newblock \bibinfo{journal}{ACM Transactions on Mathematical Software (TOMS)}
  \bibinfo{volume}{33} (\bibinfo{year}{2007}) \bibinfo{pages}{4--es}.
%Type = Article
\bibitem[{Hairer(2010)}]{hairer2010energy}
\bibinfo{author}{E.~Hairer},
\newblock \bibinfo{title}{Energy-preserving variant of collocation methods},
\newblock \bibinfo{journal}{Journal of Numerical Analysis, Industrial and
  Applied Mathematics} \bibinfo{volume}{5} (\bibinfo{year}{2010})
  \bibinfo{pages}{73--84}.
%Type = Article
\bibitem[{McLachlan et~al.(1998)McLachlan, Quispel, and
  Robidoux}]{mclachlan1998unified}
\bibinfo{author}{R.~I. McLachlan}, \bibinfo{author}{G.~Quispel},
  \bibinfo{author}{N.~Robidoux},
\newblock \bibinfo{title}{Unified approach to {H}amiltonian systems, {P}oisson
  systems, gradient systems, and systems with {L}yapunov functions or first
  integrals},
\newblock \bibinfo{journal}{Physical Review Letters} \bibinfo{volume}{81}
  (\bibinfo{year}{1998}) \bibinfo{pages}{2399}.
%Type = Article
\bibitem[{Mac{\'\i}as-D{\'\i}az and
  Medina-Ram{\'\i}rez(2009)}]{macias2009implicit}
\bibinfo{author}{J.~Mac{\'\i}as-D{\'\i}az},
  \bibinfo{author}{I.~Medina-Ram{\'\i}rez},
\newblock \bibinfo{title}{An implicit four-step computational method in the
  study on the effects of damping in a modified
  $\alpha$-{F}ermi--{P}asta--{U}lam medium},
\newblock \bibinfo{journal}{Communications in Nonlinear Science and Numerical
  Simulation} \bibinfo{volume}{14} (\bibinfo{year}{2009})
  \bibinfo{pages}{3200--3212}.
%Type = Article
\bibitem[{Iavernaro and Trigiante(2009)}]{iavernaro2009high}
\bibinfo{author}{F.~Iavernaro}, \bibinfo{author}{D.~Trigiante},
\newblock \bibinfo{title}{High-order symmetric schemes for the energy
  conservation of polynomial {H}amiltonian problems},
\newblock \bibinfo{journal}{J. Numer. Anal. Ind. Appl. Math}
  \bibinfo{volume}{4} (\bibinfo{year}{2009}) \bibinfo{pages}{87--101}.

\end{thebibliography}

%% else use the following coding to input the bibitems directly in the
%% TeX file.

% \begin{thebibliography}{00}

% %% \bibitem[Author(year)]{label}
% %% Text of bibliographic item

% \bibitem[ ()]{}

% \end{thebibliography}
\end{document}